\newtheorem{theorem}{Theorem}
\newtheorem{lemma}{Lemma}
\newtheorem{proposition}{Proposition}
\newtheorem{corollary}{Corollary}
\theoremstyle{definition}
\newtheorem*{example}{Example}
\theoremstyle{remark}
\newcommand{\z}{\zeta}
\newcommand{\R}{\mathbb{R}}
\newcommand{\OO}{\mathcal{O}}
\newcommand{\one}{\mathbbm{1}}
\mathchardef\mhyphen="2D 
\newcommand\restr[2]{{
  \left.\kern-\nulldelimiterspace 
  #1 
  \vphantom{\big|} 
  \right|_{#2} 
  }}
\title{Approximating the Convex Hull via Metric Space Magnitude}
\author{%
  Glenn Fung \\
  American Family Insurance\\
  Madison, WI 53783 \\
  \texttt{gfung@amfam.com} \\
  \and
  Eric Bunch \\
  American Family Insurance\\
  Madison, WI 53783 \\
  \texttt{ebunch@amfam.com} \\
  \and
  Dan Dickinson \\
  American Family Insurance\\
  Madison, WI 53783 \\
  \texttt{ddickins@amfam.com} \\
}
\begin{document}

\maketitle

\begin{abstract}
Magnitude of a finite metric space and the related notion of magnitude functions on metric spaces is an active area of research in algebraic topology.   Magnitude originally arose in the context of biology, where it represents the number of effective species in an environment; when applied to a one-parameter family of metric spaces $tX$ with scale parameter $t$, the magnitude captures much of the underlying geometry of the space. Prior work has mostly focussed on properties of magnitude in a global sense; in this paper we restrict the sets to finite subsets of Euclidean space and investigate its individual components. We give an explicit formula for the corrected inclusion-exclusion principle, and define a quantity associated with each point, called the \textit{moment} which gives an intrinsic ordering to the points. We exploit this in order to form an algorithm which approximates the convex hull. 
\end{abstract}

\section{Introduction}
The magnitude of a metric space is a construction that has recently garnered attention  \cite{Leinster2013TheMO}, \cite{LeinsterWillerton2013}, \cite{LeinsterMeckes16}, \cite{Barcelo18CptEuclid}. The intuition behind magnitude is often described as the ``effective number of points'' in a space. In this paper we posit a solution to the question ``which points?'' We use the magnitude of a metric space to define a quantity called the \textit{moment} associated with each point which we show captures relevant geometric information. In particular, we demonstrate how to use the moment of points to reduce the number of points needed when approximating the convex hull. We provide arguments that removing points according to Algorithm \ref{CHApprox} will not affect the magnitude of the set more than a pre-defined threshold. Further discussion suggests that the volume of the convex hull of a collection of points will not differ extremely from the volume of the convex hull of a subset of points, when the subset is chosen according to Algorithm \ref{CHApprox}. We discuss computational complexity of Algorithm \ref{CHApprox}, and discuss results of numerical experiments that approximate the convex hull of various collections of data points.

In previous work, properties of the magnitude operation $X \mapsto |X|$ have been studied, with a broad scope including enriched categories, non-Euclidean metric spaces, and infinite subsets of $\R^n$. In this paper we focus on finite subsets of $\R^n$, and in particular investigate more closely the importance of the weight vector $w = \z_X^{-1}\one$ for a finite set $X \subset \R^n$. In Section \ref{background} we give brief definitions, theorems and examples to set up for the sequel. In Section \ref{section_moment}, we investigate $|X\setminus Y|$ for finite sets $Y \subset X \subset \R^n$, and show in Lemma \ref{lemma_magnitude_restriction} that 

\[
|X\setminus Y| = |X| - \restr{w_X}{Y}^T\zeta_X/\zeta_{X\setminus Y}\restr{w_X}{Y}
\]

where $w_X$ is the weight vector for $X$, and $\restr{w_X}{Y}$ is the restriction of $w_X$ to the indices corresponding to $Y$, and $\zeta_X/\zeta_{X\setminus Y}$ is the Schur complement of $\zeta_{X \setminus Y}$ in $\zeta_X$. We use this to show in Proposition \ref{prop_magnitude_restriction_leq}

\[
|X| \geq |X \setminus Y| \geq |X| - N_Y\left( \max\{ w_X(y)^2 \mid y \in Y \} \right)
\]

where $N_Y$ is the number of points in $Y$. This shows that if $w_X$ is known, and the subset $Y$ can be chosen such that $w_X(y)$ is small for all $y \in Y$, then $|X \setminus Y|$ will be close to $|X|$. The informal discussion in \ref{discussion} suggests that by defining the moment of a point, denoted $\mu_0(x)$, we can condition on $\mu_0(y)$ instead of $w_X(y)$ to decide which subset $Y \subset X$ to remove, and the resulting set $X \setminus Y$ will be a fair approximation to $X$ in the sense that $Vol(Conv(X\setminus Y))$ will be close to $Vol(Conv(X))$. Here $Conv(X)$ denotes the convex hull of $X$.

In Section \ref{section_CH}, we formalize the process described above in Algorithm \ref{CHApprox}, and give a runtime analysis. We then run experiments using the algorithm to approximate the volume of the convex hull of data sets and display the results in Table \ref{ch_approx_exp}.

\section{Background}\label{background}


\subsection{Definitions}

Let $X \subset \R^n$ be a finite set of size $N$, i.e. $\#X = N$. Write $X = \{ x_1, x_2,...,x_N \}$. The \textbf{similarity matrix} of $X$ is $\z_X$, has entries $\z_X(i, j) = \exp(-\Vert x_i - x_j \Vert)$. In this setting, the inverse of $\z_X$ always exists; thus we can define the \textbf{weight vector} of $X$ to be $w = \z_X^{-1}\one$, and, following \cite{Leinster2013TheMO}, we define  the \textbf{magnitude of } $X$ to be $|X| := \sum_{i, j}\z_X^{-1}(i, j) = \sum_{i=1}^N w(x_i) = \one^T\z_X^{-1}\one$. Note that if we know $w$, then $|X| = w^T\z_Xw$.

As mentioned above, the focus of this paper centers on the importance of the elements of the weight vector. To this end, given $X = \{ x_1, x_2,...,x_N \} \subset \R^n$, define the \textbf{weight} of $x_i$, written $w(x_i)$, to be the corresponding entry in the weight vector, $w_i$. When the ordering of $X$ is understood, we simply write $w(x)$.

For an arbitrary subset (e.g. not necessarily finite) $X \subset \R^n$, define the magnitude to be
\[
|X| = \sup\{ |Y| \mid Y \subset X \text{ is finite }  \}
\]

For a set $X \subset \R^n$, and $t \in (0, \infty]$, one can define a metric space $tX = (X, td)$ as having the same points as $X$, but distance metric to be scaled by $t$; i.e. $(td)(x, y) = td(x, y)$. Note that since $X \subset \R^n$, the metric space $tX$ is equivalent to the metric space consisting of points whose coordinates are those of $X$ scaled by $t$ in each coordinate with the usual metric on $\R^n$. In this paper we will denote this space by $tX$ as well and will not disambiguate, in order for ease of exposition. The \textbf{magnitude function} of $X \subset \R^n$ is defined to be the function $t \rightarrow |tX|$ for $t \in (0, \infty]$.

\begin{example}
Instead of showing explicit computations of the magnitude of certain special metric spaces, which is done extensively in \cite{Leinster2013TheMO}, \cite{LeinsterMeckes16}, and \cite{LeinsterWillerton2013},  we will show plots of some finite subsets of $\R^2$ and color the points corresponding to their weighting. This will be suggestive and set the stage for the sequel. Figure \ref{weight_plots} has examples of four data sets where the color of a point represents $\log(1 + w(x))$.
\end{example}

\begin{figure}
  \includegraphics[scale=0.4]{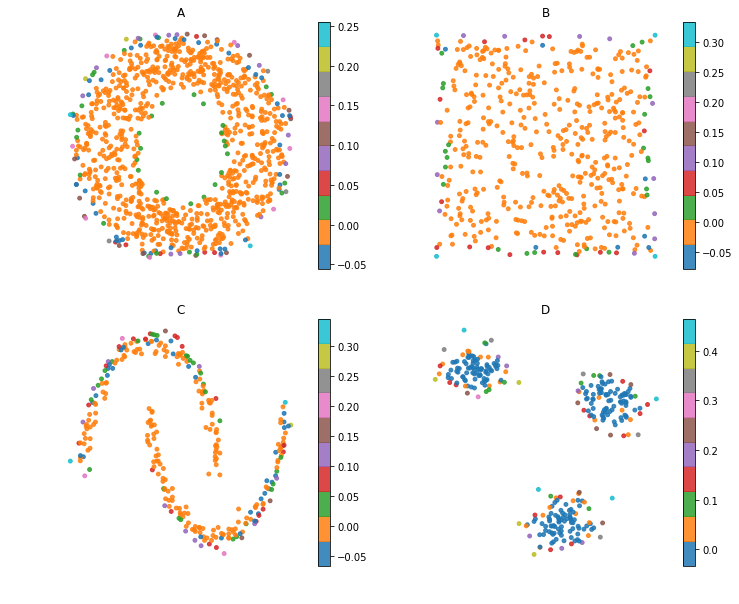}
  \caption{For each data set, the weight $w$ is calculated. The color of a point $x$ represents $\log(1+w(x))$. Synthetic data is plotted for A) Annulus B) Square C) Noisy moons D) Gaussian blobs.}
  \label{weight_plots}
\end{figure}

Figure \ref{weight_plots} hints that the weight of a point captures relevant geometric information. We formalize this for a three-point space in the following proposition.

\begin{proposition}\label{triangle_weight_prop}
Let $X \subset \R^n$ consist of three points, $X = \{x_1, x_2, x_3\}$, with similarity matrix $\z_X$ and weight vector $w$. If $\Vert x_1 - x_3 \Vert \geq \Vert x_2 - x_3 \Vert \geq \Vert x_1 - x_2 \Vert$, then $w(x_3) \geq w(x_2) \geq w(x_1)$.
\end{proposition}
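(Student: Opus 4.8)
The plan is to reduce the proposition to an explicit $3\times 3$ computation followed by a short sign analysis. Write $a=\|x_1-x_2\|$, $b=\|x_2-x_3\|$, $c=\|x_1-x_3\|$, so the hypothesis reads $c\ge b\ge a\ge 0$, and set $\alpha=e^{-a}$, $\beta=e^{-b}$, $\gamma=e^{-c}$; since $t\mapsto e^{-t}$ is strictly decreasing on $[0,\infty)$ with $e^{0}=1$, this yields $1\ge\alpha\ge\beta\ge\gamma>0$. Then $\z_X$ is the symmetric matrix with ones on the diagonal and $\alpha,\beta,\gamma$ off the diagonal, and --- as is already needed for $w=\z_X^{-1}\one$ to make sense --- it is positive definite, so $\det\z_X>0$.

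First I would write the weights by Cramer's rule: $w(x_i)=\det(\z_X^{(i)})/\det\z_X$, where $\z_X^{(i)}$ is $\z_X$ with its $i$-th column replaced by $\one$. Expanding the numerator along that column, $\det(\z_X^{(i)})=\sum_j C_{ij}$ is a sum of cofactors of $\z_X$, hence an explicit quadratic polynomial in $\alpha,\beta,\gamma$. Because $\det\z_X>0$, the ordering of $w(x_1),w(x_2),w(x_3)$ coincides with the ordering of these three numerators, so the whole proposition reduces to comparing three quadratics.

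Next I would form the three pairwise differences of the numerators. The structural fact to verify is that each difference factors as a product of two affine forms in $\alpha,\beta,\gamma$ --- one obtains factors of the ``difference of two similarities'' type, such as $\alpha-\gamma$, $\beta-\gamma$, $\alpha-\beta$, and factors of the type $1+\beta-\alpha-\gamma$, $1+\alpha-\beta-\gamma$, and so on. A ``difference of two similarities'' factor is signed immediately by the chain $1\ge\alpha\ge\beta\ge\gamma$, and a factor of the form $1+(\text{one similarity})-(\text{two similarities})$ is nonnegative as soon as the lone positive similarity dominates one of the two subtracted similarities while $1$ dominates the other --- again read off from $1\ge\alpha\ge\beta\ge\gamma$. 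This settles the comparison between $w(x_3)$ and $w(x_2)$ and the comparison between $w(x_2)$ and $w(x_1)$ using nothing but monotonicity of $e^{-t}$; it is worth checking these two against a small numerical example (say, three collinear points) to be sure the affine factors have been oriented correctly.

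The remaining comparison --- the one between the two vertices that are farthest apart, $x_1$ and $x_3$ --- is the delicate step and, I expect, the main obstacle: its factor is not signed by $1\ge\alpha\ge\beta\ge\gamma$ alone. Here one genuinely needs the triangle inequality $\|x_1-x_3\|\le\|x_1-x_2\|+\|x_2-x_3\|$, i.e. $\gamma\ge\alpha\beta$; substituting $\gamma\ge\alpha\beta$ into the offending factor and using $\beta\le 1$ and $\alpha\le 1$ makes it nonnegative. That some metric input is essential here, rather than just positive definiteness of $\z_X$, can be seen from a positive-definite symmetric matrix with $1\ge\alpha\ge\beta\ge\gamma$ but $\gamma<\alpha\beta$, for which the corresponding weight inequality fails. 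Assembling the three sign determinations then yields the total order of $w(x_1),w(x_2),w(x_3)$ --- ordered according to the total length of the two edges meeting at each vertex, so that $w(x_3)$, the vertex of the two longest edges, is the largest.
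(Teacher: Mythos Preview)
Your plan is correct and mirrors the paper's proof: both compute the three weights explicitly (Cramer's rule / solving $\z_X w=\one$), factor the pairwise differences of the numerators, and sign two of the three factors using only the monotonicity $1\ge\alpha\ge\beta\ge\gamma$, reserving the triangle inequality for the comparison $w(x_3)\ge w(x_1)$. The one genuine variation is in that last step: the paper bounds the offending factor $\alpha-1+\beta-\gamma$ by a tangent-line (convexity) argument for $e^{-t}$ combined with $c-b\le a$, whereas your route via $\gamma\ge\alpha\beta$ gives the same conclusion in one line, since $\alpha+\beta-\gamma-1\le\alpha+\beta-\alpha\beta-1=-(1-\alpha)(1-\beta)\le 0$; this is a bit cleaner and makes the role of the metric constraint more transparent.
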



\begin{proof}
Let $a = \Vert x_1 - x_2 \Vert$, $b = \Vert x_2 - x_3 \Vert$, and $c = \Vert x_1 - x_3 \Vert$, so $c \geq b \geq a > 0 $. Then solving $\z_X \dot w = \one$, we have \\
$w(x_1) = (-e^{-a}e^{-b} + e^{-a} + e^{-2b} - e^{-b}e^{-c} + e^{-c} - 1)/({-\det(\z_X)})$, \\
$w(x_2) = (-e^{-a}e^{-c} + e^{-a} + e^{-2c} - e^{-b}e^{-c} + e^{-b} - 1)/({-\det(\z_X)})$, \\
$w(x_3) = (-e^{-a}e^{-b} + e^{-b} + e^{-2a} - e^{-a}e^{-c} + e^{-c} - 1)/({-\det(\z_X)})$.\\
Then 
\begin{align}
w(x_1) - w(x_2)  &= e^{-a}e^{-b} + e^{-a}e^{-c} + e^{-2b} - e^{-b} - e^{-2c} + e^{-c}) / ({-\det(\z_X)}) \nonumber\\
&= -(e^{-b} - e^{-c})((e^{-a} - e^{-b}) + (1 - e^{-c})) / ({-\det(\z_X)}) \nonumber\\
&= (e^{-b} - e^{-c})((e^{-a} - e^{-b}) + (1 - e^{-c})) / {\det(\z_X)} \nonumber
\end{align}
Since $c \geq b \geq a > 0 $, $e^{-b} \geq e^{-c}$, $e^{-a} \geq e^{-b}$, and $1 \geq e^{-c}$, so $(e^{-b} - e^{-c})((e^{-a} - e^{-b}) + (1 - e^{-c})) \geq 0$. By Theorem \ref{theorem_pos_def}, $\det(\z_X) > 0$, so $(e^{-b} - e^{-c})((e^{-a} - e^{-b}) + (1 - e^{-c})) / {\det(\z_X)} \geq 0$ proving $w(x_1) \geq w(x_2)$.

Similarly, 
\begin{align} 
w(x_3) - w(x_2)  &= -e^{-a}e^{-b} + e^{-b}e^{-c} + e^{-2a} - e^{-a} - e^{-2c} + e^{-c}) / ({-\det(\z_X)}) \nonumber\\
&= -(e^{-a} - e^{-c})((e^{-b} - e^{-c}) + (1 - e^{-a})) / ({-\det(\z_X)}) \nonumber\\
&= (e^{-a} - e^{-c})((e^{-b} - e^{-c}) + (1 - e^{-a})) / {\det(\z_X)} \nonumber
\end{align}
Noting $e^{-a} \geq e^{-c}$, $e^{-b} \geq e^{-c}$, and $1 \geq e^{-a}$, so $(e^{-a} - e^{-c})((e^{-b} - e^{-c}) + (1 - e^{-a})) \geq 0$. By Theorem \ref{theorem_pos_def}, $\det(\z_X) > 0$, so $(e^{-a} - e^{-c})((e^{-b} - e^{-c}) + (1 - e^{-a}))  / {\det(\z_X)} \geq 0$ proving $w(x_3) \geq w(x_3)$.

Finally,
\begin{align} 
w(x_3) - w(x_1)  &= -e^{-a}e^{-c} + e^{-b}e^{-c} + e^{-2a} - e^{-a} - e^{-2b} + e^{-b}) / ({-\det(\z_X)}) \nonumber\\
&= (e^{-a} - e^{-b})(e^{-a} -1 + e^{-b} - e^{-c}) / ({-\det(\z_X)}) \label{w3w1} 
\end{align}
Let $f(x) = e^{-x}$, with $g_z(x) = e^{-z}+ze^{-z} - xe^{-z}$ denoting the tangent line of $f(x)$ at $z$. The convexity of $f$ implies $f(x) \geq g_z(x)$ for any $z$ and $x$. Taking $z=b$ and $x=c$, \\
$e^{-b} +be^{-b} - ce^{-b} \leq e^{-c}$, so 
\begin{align}
e^{-b} - e^{-c} &\leq e^{-b}(c - b) \nonumber\\
&\leq e^{-a}(c - b) \label{abc}\\
&\leq ae^{-a} \label{abctri}
\end{align}
using $e^{-b} \leq e^{-a}$ on line \ref{abc} the triangle inequality on line \ref{abctri}. 
Thus \begin{align} (e^{-a} - 1 + e^{-b} - e^{-c}) &\leq (e^{-a} - 1 + ae^{-a}) \nonumber\\
&\leq 0\nonumber
\end{align}
Where the last step has used convexity of $f$, with $z=a$ and $x=0$ to yield $e^{-a}(1+a) \leq 1$.
The numerator of line \ref{w3w1} is therefore non-positive, and since the denominator is negative by Theorem \ref{theorem_pos_def} $w(x_3) \geq w(x_1)$.

\end{proof}

\subsection{Properties and Theorems}

In this section we will collect and record some results that will be useful in the sequel. The following three important results ensure that for finite sets $X \subset \R^n$ the similarity matrix and magnitude function are reasonably well behaved (Theorem \ref{theorem_t_inf_finite_set} actually holds for arbitrary finite metric spaces).

\begin{theorem}[Theorem 2.5.3, \cite{Leinster2013TheMO}]\label{theorem_pos_def}
$\z_X$ and thus $\z_X^{-1}$ are symmetric positive definite for finite sets $X \subseteq \R^n$.
\end{theorem}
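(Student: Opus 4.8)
The plan is to reduce the statement to the classical fact that $x \mapsto e^{-\|x\|}$ is a strictly positive definite function on $\R^n$. Symmetry of $\z_X$ is immediate, since $\|x_i - x_j\| = \|x_j - x_i\|$ gives $\z_X(i,j) = \z_X(j,i)$; so the real content is positive definiteness. Moreover, once $\z_X$ is known to be symmetric positive definite it is in particular invertible, and the inverse of a symmetric positive definite matrix is again symmetric positive definite (same eigenvectors, reciprocal eigenvalues), which settles the claim for $\z_X^{-1}$. Hence it suffices to show that for distinct points $x_1,\dots,x_N \in \R^n$ and any nonzero $c \in \R^N$ one has $\sum_{i,j} c_i c_j\, e^{-\|x_i - x_j\|} > 0$. (This is exactly Leinster's Theorem 2.5.3; I sketch a self-contained argument.)

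The key tool is the subordination identity $e^{-a} = \frac{1}{\sqrt{\pi}} \int_0^\infty t^{-1/2} e^{-t}\, e^{-a^2/(4t)}\, dt$, valid for all $a \geq 0$, which rewrites the exponential of a distance as a positive average of exponentials of \emph{squared} distances. Substituting $a = \|x_i - x_j\|$ and interchanging the (finite) sum with the integral reduces the problem to showing that the Gaussian kernel $(i,j) \mapsto e^{-\|x_i - x_j\|^2/(4t)}$ is positive semidefinite for every fixed $t > 0$, and strictly positive (on the nonzero vector $c$) for at least some $t$. Since the weight $t^{-1/2}e^{-t}/\sqrt{\pi}$ is strictly positive on $(0,\infty)$, this yields $c^T \z_X c > 0$.

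Positive definiteness of the Gaussian kernel is itself classical, and I would obtain it by Fourier analysis (Bochner's theorem): up to a positive constant $c_{n,t}$, $e^{-\|x_i - x_j\|^2/(4t)} = c_{n,t}\int_{\R^n} e^{\mathrm{i}\langle \xi,\, x_i - x_j\rangle}\, e^{-t\|\xi\|^2}\, d\xi$, so $\sum_{i,j} c_i c_j\, e^{-\|x_i - x_j\|^2/(4t)} = c_{n,t}\int_{\R^n} \big|\sum_i c_i\, e^{\mathrm{i}\langle\xi,\,x_i\rangle}\big|^2 e^{-t\|\xi\|^2}\, d\xi \geq 0$; this integral vanishes only if the exponential sum $\xi \mapsto \sum_i c_i\, e^{\mathrm{i}\langle\xi,\, x_i\rangle}$ is identically zero, which is impossible for distinct $x_i$ and nonzero $c$ because the characters $e^{\mathrm{i}\langle\cdot,\, x_i\rangle}$ are linearly independent. (Alternatively, write $e^{-\|x_i-x_j\|^2/(4t)}$ as a diagonal conjugation of $e^{\langle x_i,x_j\rangle/(2t)}$, expand the latter in its convergent power series and use that each kernel $\langle x,y\rangle^k$ is positive semidefinite as a symmetric power of a Gram matrix; but this needs an extra argument for strictness, so I prefer the Fourier route.)

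The main obstacle — essentially the only nonroutine point — is upgrading from mere positive \emph{semi}definiteness to strict positive definiteness: the semidefinite statement drops out immediately from either integral representation, but excluding a nontrivial null vector requires the linear-independence-of-characters argument above and genuinely uses that the points of $X$ are distinct, i.e., that $X$ is an honest finite subset of $\R^n$. The remaining bookkeeping (symmetry, invertibility, transferring positive definiteness to $\z_X^{-1}$) is elementary linear algebra.
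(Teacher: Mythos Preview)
Your argument is correct. The subordination identity writing $e^{-\|x\|}$ as a positive mixture of Gaussian kernels, together with the Bochner/Fourier representation of the Gaussian and the linear independence of characters to secure strictness, is exactly the standard route to this result and all steps go through as you describe.

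However, there is nothing to compare against: the present paper does not prove this theorem at all. It merely records the statement and cites Leinster's Theorem~2.5.3 as an external input, then uses it as a black box (e.g., to guarantee $\det(\z_X)>0$ in Proposition~\ref{triangle_weight_prop} and invertibility of principal submatrices in Lemma~\ref{lemma_magnitude_restriction}). So your write-up is not an alternative to the paper's proof but rather a self-contained justification of a result the paper takes for granted. For what it's worth, your approach is in the same spirit as Leinster's original argument, which also proceeds by showing that Euclidean space carries a metric of negative type and that $e^{-d}$ is therefore a (strictly) positive definite kernel.
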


\begin{theorem}[Proposition 2.2.6 \cite{Leinster2013TheMO}]\label{theorem_analytic}
The magnitude function of $X \subseteq \R^n$ is analytic at all $t \in (0, \infty]$.
\end{theorem}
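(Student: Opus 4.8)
The plan is to reduce immediately to the case of a finite set $X = \{x_1,\dots,x_N\}$: for infinite $X \subseteq \R^n$ the magnitude function is the pointwise supremum of the finite ones and its analyticity is the deeper content of \cite{Leinster2013TheMO} (requiring the theory of magnitude of compact spaces), whereas it is only the finite case that is used in this paper and that can be obtained directly from the results already recorded. So I would fix a finite $X$, note that $tX$ is again a finite subset of $\R^n$ (the points rescaled by $t$), and recall $|tX| = \one^T\zeta_{tX}^{-1}\one$ with $\zeta_{tX}(i,j) = \exp(-t\Vert x_i - x_j\Vert)$; Theorem \ref{theorem_pos_def} then applies to $\zeta_{tX}$ for every $t \in (0,\infty)$.

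On $(0,\infty)$ I would argue in three short steps. First, each entry $t \mapsto \exp(-t\Vert x_i - x_j\Vert)$ is the restriction of an entire function, hence real-analytic on $(0,\infty)$, so $\det\zeta_{tX}$ — a polynomial in these entries — is real-analytic in $t$ there. Second, by Theorem \ref{theorem_pos_def} the matrix $\zeta_{tX}$ is positive definite for each $t \in (0,\infty)$, so $\det\zeta_{tX} > 0$ and in particular never vanishes. Third, by the cofactor (adjugate) formula for the inverse, every entry of $\zeta_{tX}^{-1}$ equals a cofactor of $\zeta_{tX}$ divided by $\det\zeta_{tX}$, i.e.\ a quotient of real-analytic functions with non-vanishing denominator, hence real-analytic; summing the $N^2$ entries, $t \mapsto |tX|$ is real-analytic on $(0,\infty)$.

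For the endpoint $t = \infty$ I would write, for $t$ large, $\zeta_{tX} = I + E(t)$ where each entry of $E(t)$ is $e^{-t\Vert x_i-x_j\Vert}$ (and $0$ on the diagonal), so $\Vert E(t)\Vert \to 0$ and the Neumann series $\zeta_{tX}^{-1} = \sum_{k\ge 0}(-E(t))^k$ converges uniformly for $t \ge t_0$ once $t_0$ is large. Expanding and collecting exponents exhibits $|tX|$ on $(t_0,\infty)$ as $N$ plus an absolutely convergent exponential sum $\sum_{\lambda>0} c_\lambda e^{-t\lambda}$; this shows $|tX| \to N = \#X$ and that the function extends analytically through $t = \infty$ in the sense used in \cite{Leinster2013TheMO}.

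I expect the only real obstacle to be this endpoint: on $(0,\infty)$ the statement is an essentially immediate consequence of the cofactor formula together with the strict positivity $\det\zeta_{tX} > 0$ supplied by Theorem \ref{theorem_pos_def}, while making ``analytic at $\infty$'' precise requires either the exponential-sum expansion above or passing to whatever coordinate \cite{Leinster2013TheMO} uses to compactify the parameter interval — and the fully general (infinite $X$) statement needs the analytic-continuation machinery of \cite{Leinster2013TheMO}, which I would simply cite.
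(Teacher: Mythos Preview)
The paper does not prove this theorem; it simply records it as Proposition~2.2.6 of \cite{Leinster2013TheMO} without argument. Your sketch for finite $X$ on $(0,\infty)$ --- entries of $\zeta_{tX}$ entire in $t$, $\det\zeta_{tX}>0$ by Theorem~\ref{theorem_pos_def}, then the cofactor formula --- is correct and is precisely the standard argument given in \cite{Leinster2013TheMO}; your Neumann-series treatment of the $t\to\infty$ endpoint is likewise sound and in the same spirit as Leinster's.

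You are also right to flag the scope of the statement as written. Leinster's Proposition~2.2.6 is stated only for \emph{finite} metric spaces, and this paper only ever invokes the result for finite $X\subset\R^n$ (to ensure the integrand $w_t(x)^2$ in the definition of $\mu_0$ is well behaved). So your decision to treat the finite case directly and defer the infinite case to a citation is exactly the appropriate reading; the ``$X\subseteq\R^n$'' in the theorem statement should really be read as finite $X$.
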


\begin{theorem}[Proposition 2.2.6 \cite{Leinster2013TheMO}]\label{theorem_t_inf_finite_set}
For $X \subset \R^n$ finite, $\lim_{t \rightarrow \infty} |tX| = N$, where $N$ is the number of points in $X$.
\end{theorem}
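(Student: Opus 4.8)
The plan is to show that the similarity matrix $\z_{tX}$ converges entrywise to the identity matrix as $t \to \infty$, and then to invoke continuity of matrix inversion. First I would observe that, since $X$ is a finite set, its points are distinct, so $\Vert x_i - x_j \Vert > 0$ whenever $i \neq j$. Hence $\z_{tX}(i, j) = \exp(-t\Vert x_i - x_j \Vert) \to 0$ as $t \to \infty$ for every $i \neq j$, while the diagonal entries are identically $1$. Thus $\z_{tX} \to I_N$ entrywise, where $I_N$ is the $N \times N$ identity matrix; since all norms on the finite-dimensional space of $N \times N$ matrices are equivalent, this convergence also holds in, say, operator norm.

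Next I would use the fact that, by Theorem \ref{theorem_pos_def}, $\z_{tX}$ is invertible for every $t \in (0, \infty)$ (the space $tX$ being again a finite subset of $\R^n$), together with the invertibility of $I_N$, so that the map $A \mapsto A^{-1}$ is continuous at $I_N$. It follows that $\z_{tX}^{-1} \to I_N^{-1} = I_N$ as $t \to \infty$, and therefore
\[
\lim_{t \to \infty} |tX| = \lim_{t \to \infty} \one^T \z_{tX}^{-1} \one = \one^T I_N \one = N.
\]

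The only point requiring a little care is the justification that inversion is continuous on the set of invertible matrices. This can be obtained from Cramer's rule — the entries of $A^{-1}$ are rational functions of the entries of $A$ whose common denominator $\det A$ is nonzero in a neighborhood of $I_N$ — or, alternatively, from the Neumann series $\z_{tX}^{-1} = \sum_{k \geq 0} (I_N - \z_{tX})^k$, which is valid once $\Vert I_N - \z_{tX} \Vert < 1$, and hence for all sufficiently large $t$. I do not anticipate a genuine obstacle here; the substance of the argument is simply to combine the entrywise limit $\z_{tX} \to I_N$ with the continuity of inversion, and then read off the limit of $\one^T \z_{tX}^{-1} \one$.
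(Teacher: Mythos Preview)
Your argument is correct. The paper does not supply its own proof of this statement; it simply records it as a known result, citing Proposition~2.2.6 of \cite{Leinster2013TheMO}. Your route --- showing $\z_{tX} \to I_N$ entrywise and invoking continuity of matrix inversion at $I_N$ --- is exactly the standard one, and the justification you give (Cramer's rule or the Neumann series) is adequate for the continuity step.
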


Theorem \ref{theorem_pos_def} will be used extensively in the sequel, and Theorem \ref{theorem_analytic} is used implicitly in the definition of the moment of a point in Section \ref{section_moment}. Next, if we concern ourselves with a finite set $X \subset \R^n$, and a subset $Y \subset X$, then the following theorem gives a nice relationship between $|Y|$ and $|X|$.

\begin{theorem}[Corollary 2.10 \cite{LeinsterMeckes16}]\label{theorem_subset_mag_leq}
For $X, Y \subset \R^n$ finite with $\emptyset \neq Y \subseteq X$, then $1 \leq |Y| \leq |X|$. 
\end{theorem}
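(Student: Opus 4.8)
This statement is quoted as Corollary 2.10 of \cite{LeinsterMeckes16}; the argument I would give rests on the variational (quadratic-form) description of magnitude. For any finite $Z \subset \R^n$ the matrix $\z_Z$ is symmetric positive definite by Theorem \ref{theorem_pos_def}, and one has
\[
|Z| \;=\; \max_{0 \neq v \in \R^{\#Z}} \frac{(\one^T v)^2}{v^T \z_Z v},
\]
the maximum being attained at the weight vector $v = \z_Z^{-1}\one$ (the quotient is scale-invariant, so the maximum over the compact unit sphere exists, and it is finite since $\z_Z$ is positive definite). The plan is to establish this identity first and then read off both inequalities.

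For the lower bound, substitute $v = e_i$, the $i$-th standard basis vector: then $\one^T e_i = 1$ and $e_i^T \z_Z e_i = \z_Z(i,i) = \exp(-\Vert x_i - x_i\Vert) = 1$, so the quotient equals $1$, and hence $|Z| \geq 1$. Applying this with $Z = Y$ gives $|Y| \geq 1$.

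For the upper bound $|Y| \leq |X|$, take a maximizer $u \in \R^{\#Y}$ for $Y$ and extend it to a vector $v \in \R^{\#X}$ by setting all coordinates indexed by $X \setminus Y$ equal to $0$. Since $\z_Y$ is precisely the principal submatrix of $\z_X$ on the indices belonging to $Y$, padding with zeros changes neither the numerator nor the denominator: $\one^T v = \one^T u$, and $v^T \z_X v = u^T \z_Y u$ because every term of $v^T \z_X v$ that involves an index of $X \setminus Y$ is multiplied by a zero entry of $v$. Therefore
\[
|X| \;\geq\; \frac{(\one^T v)^2}{v^T \z_X v} \;=\; \frac{(\one^T u)^2}{u^T \z_Y u} \;=\; |Y|.
\]

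The only substantive step is the variational identity itself, which I expect to be the main obstacle. I would obtain it from the Cauchy--Schwarz inequality in the inner product $\langle a, b\rangle := a^T \z_Z b$, which is legitimate precisely because $\z_Z$ is positive definite: with $b := \z_Z^{-1}\one$ one gets $(\one^T v)^2 = (\langle b, v\rangle)^2 \leq \langle b, b\rangle\,\langle v, v\rangle = (\one^T \z_Z^{-1}\one)(v^T \z_Z v) = |Z|\,(v^T \z_Z v)$, with equality exactly when $v$ is a scalar multiple of $b$. A secondary technical point is that a principal submatrix of a positive-definite matrix is again positive definite; this is standard, and it is needed both to know that $\z_Y$ is invertible (so that $|Y|$ is defined and the formula applies to $Y$) and to ensure that the denominator $u^T \z_Y u$ appearing above is strictly positive.
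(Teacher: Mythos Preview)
Your argument is correct. The paper does not supply its own proof of this statement: it is quoted as Corollary~2.10 of \cite{LeinsterMeckes16} and used as a black box. The variational characterization
\[
|Z| \;=\; \max_{0 \neq v} \frac{(\one^T v)^2}{v^T \z_Z v}
\]
that you derive via Cauchy--Schwarz in the $\z_Z$-inner product, followed by the test vector $v = e_i$ for the lower bound and zero-padding for monotonicity, is exactly the approach taken in the cited reference, so in that sense you have reproduced the intended proof.

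It is worth noting that the paper's own Lemma~\ref{lemma_magnitude_restriction} incidentally yields an independent proof of the monotonicity half: since $\z_X/\z_{X\setminus Y}$ is positive definite, the identity $|X\setminus Y| = |X| - \restr{w_X}{Y}^T(\z_X/\z_{X\setminus Y})\restr{w_X}{Y}$ immediately gives $|X\setminus Y| \leq |X|$. That Schur-complement route is more computational and gives the exact defect, whereas your variational argument is cleaner and also delivers the lower bound $|Y|\geq 1$ in the same stroke.
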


The next theorem gives a useful way to approximate the magnitude of an infinite set in $\R^n$

\begin{theorem}[Corollary 2.7 \cite{MeckesPosDef}]\label{theorem_finite_hd_approx}
If $X \subset \R^n$ is compact and $\{ X_k \}$ is a sequence of finite subsets of $\R^n$ such that $\lim_{k \rightarrow \infty} d_H(X_k, X) = 0$, then $|X| = \lim_{k \rightarrow \infty}X_k$ where $d_H$ is the Hausdorff distance.
\end{theorem}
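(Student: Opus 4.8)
The plan is to recast $|A|$, for $A\subset\R^n$ finite or compact, as the value of an optimization over the reproducing-kernel Hilbert space $\mathcal H$ of the positive-definite kernel $K(x,y)=e^{-\|x-y\|}$ on $\R^n$ (every finite Gram matrix of $K$ being positive definite by Theorem \ref{theorem_pos_def}), and then to prove that this value depends continuously on $A$ under Hausdorff perturbation. Writing $\phi(x)=K(\cdot,x)\in\mathcal H$, for a finite $A=\{z_1,\dots,z_m\}$ one has
\[
|A|\;=\;\min\{\|f\|_{\mathcal H}^2\;:\;f\in\mathcal H,\ f(z_i)=1\ \forall i\}\;=\;\sup_{\mu}\Big(2\mu(A)-\|\mu\|_{\mathcal H}^2\Big),
\]
the minimum attained at $f=\sum_i w(z_i)\phi(z_i)$ with $w=\z_A^{-1}\one$, and the supremum taken over finitely supported signed measures $\mu$ on $A$ with $\|\mu\|_{\mathcal H}^2:=\iint K\,d\mu\,d\mu=\big\|\sum_j\mu(\{z_j\})\phi(z_j)\big\|_{\mathcal H}^2$; both sides evaluate to $\one^T\z_A^{-1}\one=|A|$. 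The supremum form makes sense verbatim for compact $X$ (with $\mu$ finitely supported on $X$), and since $|X|=\sup\{|B|:B\subseteq X\text{ finite}\}$ it still computes $|X|$; in particular, directly from the finite case and this definition, $|X|\ge 2\mu(X)-\|\mu\|_{\mathcal H}^2$ for every such $\mu$. I would then prove $|X|\le\liminf_k|X_k|$ and $\limsup_k|X_k|\le|X|$ separately.

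For the lower bound, fix $\varepsilon>0$ and a finite $Y=\{y_1,\dots,y_r\}\subseteq X$ with $|Y|>|X|-\varepsilon$. As $d_H(X_k,X)\to 0$, for each $i$ pick $y_i^{(k)}\in X_k$ with $\|y_i^{(k)}-y_i\|\le d_H(X_k,X)$; for $k$ large these are distinct, so $Y_k:=\{y_1^{(k)},\dots,y_r^{(k)}\}\subseteq X_k$ has $r$ points and $\z_{Y_k}\to\z_Y$ entrywise. Since $\z_Y$ is invertible (Theorem \ref{theorem_pos_def}), $\z_{Y_k}^{-1}\to\z_Y^{-1}$, so $|Y_k|=\one^T\z_{Y_k}^{-1}\one\to|Y|$; as $|X_k|\ge|Y_k|$ by Theorem \ref{theorem_subset_mag_leq}, $\liminf_k|X_k|\ge|Y|>|X|-\varepsilon$, and letting $\varepsilon\downarrow0$ gives $\liminf_k|X_k|\ge|X|$. (If $|X|=\infty$ this already forces $|X_k|\to\infty$ and nothing more is needed, so assume $|X|<\infty$, which holds for any compact $X\subset\R^n$.)

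For the upper bound, let $\mu_k=\sum_i c_i\,\delta_{x_i}$ be an optimal weighting of $X_k$, so $\mu_k(X_k)=\|\mu_k\|_{\mathcal H}^2=|X_k|$, and let $\pi\colon X_k\to X$ send each point to a nearest point of $X$ (which exists by compactness, at distance $\le\delta_k:=d_H(X_k,X)$). Its push-forward $\widetilde\mu_k:=\pi_*\mu_k$ is finitely supported on $X$ with $\widetilde\mu_k(X)=|X_k|$, and from $\|\phi(x)-\phi(y)\|_{\mathcal H}^2=2-2e^{-\|x-y\|}\le 2\|x-y\|$ we get
\[
\big|\,\|\widetilde\mu_k\|_{\mathcal H}-\|\mu_k\|_{\mathcal H}\,\big|\;\le\;\Big\|\textstyle\sum_i c_i\big(\phi(\pi(x_i))-\phi(x_i)\big)\Big\|_{\mathcal H}\;\le\;\sqrt{2\delta_k}\,\|\mu_k\|_{\mathrm{TV}},\qquad \|\mu_k\|_{\mathrm{TV}}:=\textstyle\sum_i|c_i|.
\]
Feeding $\widetilde\mu_k$ into the variational bound for $|X|$,
\[
|X|\;\ge\;2\widetilde\mu_k(X)-\|\widetilde\mu_k\|_{\mathcal H}^2\;\ge\;|X_k|-2\sqrt{2|X_k|\,\delta_k}\,\|\mu_k\|_{\mathrm{TV}}-2\delta_k\,\|\mu_k\|_{\mathrm{TV}}^2,
\]
so $\limsup_k|X_k|\le|X|$ as soon as $\delta_k\,\|\mu_k\|_{\mathrm{TV}}^2\to0$; combined with the lower bound this yields $|X_k|\to|X|$.

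The main obstacle is precisely that last condition. The Hilbert-space identities only control $\|\mu_k\|_{\mathcal H}^2=|X_k|$, whereas weight vectors of finite subsets of $\R^n$ can have large negative entries once the set contains clusters of nearly coincident points, so $\|\mu_k\|_{\mathrm{TV}}$ need not stay bounded and the crude estimate above can fail. I would expect the cited proof to circumvent this by working with the weighting as the Hilbert-space element $h_{X_k}=\sum_i c_i\phi(x_i)$ (rather than as a measure) and comparing $h_{X_k}$ with the orthogonal projection of the analogous element $h_X$ onto the closed span of $\{\phi(z):z\in X_k\}$: the resulting constraint error is \emph{smooth} along $X_k$ and hence not amplified by the ill-conditioned directions of $\z_{X_k}$. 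Alternatively one can use the monotonicity $|X_k|\le|X^{\delta_k}|$ (immediate from the variational description, since $X_k$ lies in the closed $\delta_k$-neighbourhood $X^{\delta_k}$ of $X$) together with $|X^{\delta_k}|\to|X|$ as $\delta_k\downarrow 0$, which follows from continuity of the functions in $\mathcal H$; this upper-semicontinuity step is the part I expect to require the full machinery behind the cited corollary.
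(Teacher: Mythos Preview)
The paper does not supply its own proof of this statement; it is quoted as Corollary~2.7 of \cite{MeckesPosDef} and invoked only as a black box in the informal discussion of Section~\ref{discussion}. There is therefore nothing in the present paper to compare your attempt against.

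For what it is worth, your strategy is close in spirit to Meckes's. He too recasts $|A|$ variationally via the RKHS attached to the positive-definite kernel $e^{-d}$, and your $\liminf$ argument---perturb a near-optimal finite $Y\subset X$ to $Y_k\subset X_k$ and use continuity of the inverse of a fixed-size Gram matrix---is essentially his lower-semicontinuity step. For the $\limsup$ direction you correctly identify that controlling $\|\mu_k\|_{\mathrm{TV}}$ is hopeless in general (weights of nearly coincident points can blow up with opposite signs), so the crude push-forward bound need not close. Meckes circumvents this by staying in the Hilbert space and using lower semicontinuity of the energy functional under weak limits rather than a pointwise total-variation estimate---precisely the kind of fix you anticipate in your final paragraph. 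So the gap you flag is genuine, and the remedy you sketch is the right one; but none of this appears in the paper under review.
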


The following theorem gives a concrete connection between the magnitude function of an infinite subset $X \subset \R^n$ and the volume of that subset. This will be used in the informal discussion in Section \ref{discussion}.

\begin{theorem}[Theorem 1 \cite{Barcelo18CptEuclid}]\label{theorem_vol}
For $X \subset \R^n$ nonempty and compact, we have

\[
\lim_{t \rightarrow 0^+} |tX| = 1
\]

and 

\[
\lim_{t \rightarrow \infty}\frac{|tX|}{t^n} = \frac{Vol(X)}{n!Vol(B_n)}
\]

where $B_n \subset \R^n$ is the unit ball.

\end{theorem}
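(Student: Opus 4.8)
\textbf{The limit $\lim_{t\to 0^+}|tX| = 1$.} This one is elementary. The bound $|tX|\ge 1$ holds for every $t>0$: a one-point metric space has magnitude $1$, so Theorem \ref{theorem_subset_mag_leq}, applied to finite subspaces and then passing to the supremum that defines $|tX|$, gives $|tX|\ge 1$. For the reverse bound, fix an arbitrary sequence $t_k\to 0^+$; using $|t_kX| = \sup\{\,|t_kF| : F\subseteq X\text{ finite}\,\}$, choose finite $F_k\subseteq X$ with $|t_kF_k|\ge |t_kX| - 1/k$. Since $X$ is compact, $\sup_{x\in X}\Vert x\Vert<\infty$, so every point of the scaled finite set $t_kF_k\subseteq\R^n$ has norm at most $t_k\sup_{x\in X}\Vert x\Vert$, whence $d_H(t_kF_k,\{0\})\to 0$. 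Theorem \ref{theorem_finite_hd_approx} applied to the compact set $\{0\}$ then gives $|t_kF_k|\to|\{0\}| = 1$, and since $|t_kF_k|\le|t_kX|\le|t_kF_k|+1/k$ we get $|t_kX|\to 1$. As the sequence was arbitrary, $\lim_{t\to 0^+}|tX| = 1$.

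\textbf{The limit $\lim_{t\to\infty}|tX|/t^n$: the potential-theoretic input.} The approach I would take for the second limit runs through the potential-theoretic picture of magnitude of compact Euclidean sets (Meckes; Barceló--Carbery). The function $e^{-\Vert x-y\Vert}$ defining $\z$ is a positive definite kernel on all of $\R^n$ (Theorem \ref{theorem_pos_def} is the finite-set instance), and it is, up to the constant
\[
C_n \;=\; \int_{\R^n}e^{-\Vert x\Vert}\,dx \;=\; n\,Vol(B_n)\int_0^\infty e^{-r}r^{n-1}\,dr \;=\; n!\,Vol(B_n),
\]
the reproducing kernel of the Bessel potential space $H^{(n+1)/2}(\R^n)$, i.e.\ the space inverting $(1-\Delta)^{(n+1)/2}$. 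The result I would quote is the ensuing capacity formula: for compact $K\subseteq\R^n$,
\[
|K| \;=\; \frac{1}{n!\,Vol(B_n)}\,\inf\bigl\{\,\Vert f\Vert_{H^{(n+1)/2}(\R^n)}^2 \;:\; f\in H^{(n+1)/2}(\R^n),\ f|_K = 1\,\bigr\},
\]
where $\Vert f\Vert_{H^{s}}^2 = \int_{\R^n}(1+\Vert\xi\Vert^2)^{s}|\widehat f(\xi)|^2\,d\xi$ (unitary Fourier transform, so $\Vert\cdot\Vert_{H^0} = \Vert\cdot\Vert_{L^2}$); the constraint $f|_K = 1$ is meaningful because $(n+1)/2>n/2$ forces $H^{(n+1)/2}$-functions to be continuous, and the infimum is attained at the potential of the weighting measure of $K$.

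\textbf{The rescaling argument.} Granting this, both asymptotics drop out. A function equals $1$ on $tK$ precisely when $g:=f(t\,\cdot\,)$ equals $1$ on $K$, and since $\widehat{g(\cdot/t)}(\xi) = t^n\widehat g(t\xi)$,
\[
\Vert g(\cdot/t)\Vert_{H^{(n+1)/2}}^2 \;=\; t^n\int_{\R^n}\Bigl(1+\tfrac{\Vert\eta\Vert^2}{t^2}\Bigr)^{\frac{n+1}{2}}|\widehat g(\eta)|^2\,d\eta .
\]
The factor $(1+\Vert\eta\Vert^2/t^2)^{(n+1)/2}$ is $\ge 1$ and decreases to $1$ as $t\to\infty$, dominated (at $t=1$) by an integrable function since $g\in H^{(n+1)/2}$; hence the right side is $\ge t^n\Vert g\Vert_{L^2}^2$ for every $t$ and tends to it. Because $g|_K=1$ forces $\Vert g\Vert_{L^2}^2\ge\int_K 1 = Vol(K)$, applying the $\ge$ bound to an arbitrary competitor $f$ yields $|tK|\ge t^n\,Vol(K)/(n!\,Vol(B_n))$ for all $t$. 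For the matching upper bound, fix $\varepsilon>0$ and take $g$ a mollified indicator of a small open neighbourhood of $K$, so $g\in C_c^\infty$, $g|_K=1$, and $\Vert g\Vert_{L^2}^2\le Vol(K)+\varepsilon$; then $(n!\,Vol(B_n))\,t^{-n}|tK|\le\int(1+\Vert\eta\Vert^2/t^2)^{(n+1)/2}|\widehat g(\eta)|^2\,d\eta\to\Vert g\Vert_{L^2}^2\le Vol(K)+\varepsilon$ by dominated convergence, so $\limsup_{t\to\infty}|tK|/t^n\le(Vol(K)+\varepsilon)/(n!\,Vol(B_n))$. Letting $\varepsilon\to 0$ and combining with the lower bound finishes it; the argument also yields the (at first counterintuitive) fact that $|tX|/t^n\to 0$ whenever $Vol(X) = 0$.

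\textbf{The main obstacle.} None of the rescaling is hard; the genuine difficulty is the capacity formula itself, \emph{with its exact constant}. Establishing it requires (i) the identification $e^{-\Vert x\Vert} = C_n\,G_{n+1}(x)$ with $G_{n+1}$ the Bessel potential kernel --- a concrete Fourier computation, but one in which the normalization constants must be tracked carefully, recognizing that $C_n = n!\,Vol(B_n)$ is pinned down by $\widehat{e^{-\Vert\cdot\Vert}}(0) = \int_{\R^n}e^{-\Vert x\Vert}\,dx$; and (ii) Meckes's theorem that the magnitude of a compact subspace of a positive definite metric space equals the squared reproducing-kernel norm of the function identically $1$ on it --- which itself demands producing the weighting (a signed measure, possibly with a singular part carried by $\partial K$), proving it has sufficient regularity, characterizing it by a Dirichlet principle, and matching the variational quantity with the supremum over finite subspaces. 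A more elementary alternative would bypass the abstract machinery and attack the weighting equation $\int_X e^{-t\Vert x-y\Vert}\,d\nu_t(y) = 1$ ($x\in X$) head-on: for large $t$ the kernel $e^{-t\Vert\cdot\Vert}$ is an approximate identity of total mass $C_n t^{-n}$, forcing $\nu_t$ to have density $\approx t^n/C_n$ throughout the interior of $X$, so that $|tX| = \nu_t(X)\approx t^n\,Vol(X)/C_n$, with a boundary layer of width $\sim t^{-1}$ contributing only $O(t^{n-1})$; but turning this picture into rigorous estimates uniform in $t$, for a general compact $X$, is comparably delicate and is the crux of that route.
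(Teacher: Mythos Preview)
The paper does not prove this theorem at all: it is quoted verbatim as Theorem~1 of \cite{Barcelo18CptEuclid} and used as a black box in the informal discussion of Section~\ref{discussion}. So there is no ``paper's own proof'' to compare against.

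That said, your sketch is correct and is in fact a faithful outline of the proof in the cited literature. The argument for $\lim_{t\to 0^+}|tX|=1$ via Hausdorff convergence of $t_kF_k$ to $\{0\}$ is clean and valid as stated (Theorem~\ref{theorem_finite_hd_approx} requires the $X_k$ only to be finite subsets of $\R^n$, not of the limit set). For the large-$t$ limit you have reproduced exactly the Barcel\'o--Carbery strategy: Meckes's identification of $|K|$ with $(n!\,Vol(B_n))^{-1}$ times the $H^{(n+1)/2}$-capacity of $K$, followed by the rescaling computation and the two-sided squeeze using the pointwise monotonicity of $(1+\lVert\eta\rVert^2/t^2)^{(n+1)/2}$ and a $C_c^\infty$ test function for the upper bound. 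Your tracking of the constant $C_n=\int_{\R^n}e^{-\lVert x\rVert}\,dx=n!\,Vol(B_n)$ is right, and you are also right that the substantive content --- and the part you have not written out --- is the capacity formula itself, which is the main theorem of Meckes's paper and the input Barcel\'o--Carbery take for granted. The alternative ``approximate identity'' heuristic you mention at the end is indeed a plausible line but, as you note, making it rigorous for arbitrary compact $X$ is no easier than the potential-theoretic route.
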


\section{Zeroth Moment of a Point}\label{section_moment}

\subsection{Definition}

Figure \ref{weight_plots} indicates that the weight of a point is informative and potentially useful in analyzing a data set. However, it is desirable to have a version of the weight of a point that does not depend on $t$. To that end, we have the following definition. For a finite set $X \subset \R^n$, denote by $w_t$ the weight vector for $tX$. Define the \textbf{1-shifted power zeroth moment} of $x \in X$ to be

\begin{equation}\label{moment}
\mu_0(x) = \int_0^{\infty} e^{-t}w_t(x)^2dt
\end{equation}

We will also refer to $\mu_0(x)$ as the \textbf{moment} of $x$. 

\begin{example}
Figure \ref{moment_plots} shows examples of data sets in $\R^2$ where the color of a point $x$ represents $\log(1 + \mu_0(x))$.
\end{example}

%
%
%

\begin{figure}
  \includegraphics[scale=0.4]{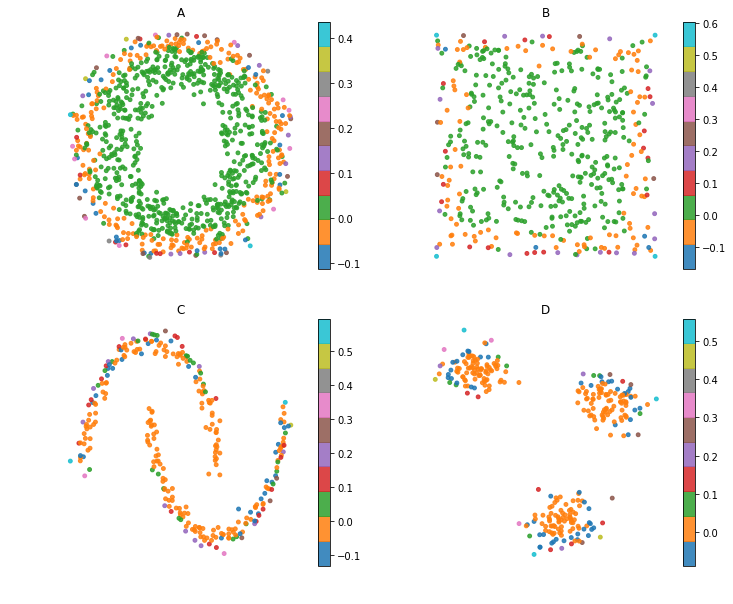}
  \caption{For each data set, the zeroth moment $\mu_0(x)$ is calculated. The color of a point $x$ represents $\log(1+\mu_0(x))$. Synthetic data is plotted for A) Annulus B) Square C) Noisy moons D) Gaussian blobs.}
  \label{moment_plots}
\end{figure}

As with weight, the moment of a point captures relevant geometric information in a three point set.

\begin{corollary}\label{triangle_weight_cor}
Let $X \subset \R^n$ consist of three points, $X = \{x_1, x_2, x_3\}$. If $\Vert x_1 - x_3 \Vert \geq \Vert x_2 - x_3 \Vert \geq \Vert x_1 - x_2 \Vert$, then $\mu_0(x_3) \geq \mu_0(x_2) \geq \mu_0(x_1)$.
\end{corollary}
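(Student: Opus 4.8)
The plan is to deduce the corollary from Proposition~\ref{triangle_weight_prop} by applying it to each rescaled space $tX$ and then integrating against $e^{-t}$. The first step is to observe that, as recalled in Section~\ref{background}, $tX$ may be regarded as the three-point subset of $\R^n$ obtained by scaling the coordinates of $X$ by $t$, so every pairwise distance is multiplied by the positive constant $t$ and the ordering of the distances is unchanged: $t\Vert x_1 - x_3\Vert \ge t\Vert x_2 - x_3\Vert \ge t\Vert x_1 - x_2\Vert$ for all $t \in (0,\infty)$. Hence Proposition~\ref{triangle_weight_prop} applies verbatim to $tX$ and gives $w_t(x_3) \ge w_t(x_2) \ge w_t(x_1)$ for every $t>0$, where $w_t$ denotes the weight vector of $tX$ as in the definition of $\mu_0$.

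Next I would promote this to an inequality between the \emph{squares} $w_t(x_i)^2$, which are what enter \eqref{moment}. Since $u \mapsto u^2$ is order-preserving on $[0,\infty)$, it suffices to show $w_t(x_1) \ge 0$: then $w_t(x_3) \ge w_t(x_2) \ge w_t(x_1) \ge 0$ and consequently $w_t(x_3)^2 \ge w_t(x_2)^2 \ge w_t(x_1)^2$. Nonnegativity can be read off from the explicit formula for $w(x_1)$ in the proof of Proposition~\ref{triangle_weight_prop}: writing $p = e^{-t\Vert x_1 - x_2\Vert}$, $q = e^{-t\Vert x_2 - x_3\Vert}$, $r = e^{-t\Vert x_1 - x_3\Vert}$, the numerator of $w_t(x_1)$ is $-pq + p + q^2 - qr + r - 1 = (1-q)(p + r - 1 - q)$. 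Because $\Vert x_1 - x_2\Vert \le \Vert x_2 - x_3\Vert \le \Vert x_1 - x_3\Vert$ we have $1 - q \ge 0$ and $p + r - 1 - q = (p-1) + (r-q) \le 0$, while $\det\z_{tX} > 0$ by Theorem~\ref{theorem_pos_def}; hence $w_t(x_1) = (1-q)(p+r-1-q)/(-\det\z_{tX}) \ge 0$.

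Finally I would multiply the pointwise chain $w_t(x_1)^2 \le w_t(x_2)^2 \le w_t(x_3)^2$ by $e^{-t} > 0$ and integrate over $(0,\infty)$; monotonicity of the integral then yields $\mu_0(x_1) \le \mu_0(x_2) \le \mu_0(x_3)$. No separate convergence discussion is needed, since the inequality between the integrals holds in $[0,\infty]$ in any case and finiteness is already implicit in the definition \eqref{moment}. I expect the only genuinely non-routine point to be the passage from the linear ordering of the $w_t(x_i)$ to the ordering of their squares, i.e. establishing that a three-point Euclidean weight vector has nonnegative entries; everything else is scale-invariance of the distance ordering, a direct reuse of Proposition~\ref{triangle_weight_prop}, and monotonicity of $\int_0^\infty e^{-t}(\cdot)\,dt$.
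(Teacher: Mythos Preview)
Your proposal is correct and follows essentially the same route as the paper: apply Proposition~\ref{triangle_weight_prop} to each rescaled space $tX$, use nonnegativity of the weights to pass to the squares, and integrate against $e^{-t}$. The only difference is that the paper obtains $w_t(x_i)>0$ by citing \cite[Proposition~2.4.15]{Leinster2013TheMO}, whereas you extract it directly from the explicit formula for $w(x_1)$ via the factorization $(1-q)(p+r-1-q)$; your computation is correct and self-contained.
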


\begin{proof}
By [Proposition 2.4.15 \cite{Leinster2013TheMO}], $w(x_i) > 0$ for all $i$, and by Proposition \ref{triangle_weight_prop} $w(x_3) \geq w(x_2)$. For any value of $t \in (0,\infty)$, $tX$ is simply another three point space with the same inequalities between the $x_i$, we have $w_t(x_3) \geq w_t(x_2)$ for all $t$, and therefore $w_t(x_3)^2 \geq w_t(x_2)^2$, where $w_t$ is the weight vector of $tX$.
Thus, $\mu_0(x_3) - \mu_0(x_2) = \int_0^{\infty} e^{-t}w_t(x_3)^2dt - \int_0^{\infty} e^{-t}w_t(x_3)^2dt = \int_0^{\infty} e^{-t}(w_t(x_3)^2 - w_t(x_2)^2)dt \geq 0$. The proof is similar for other pairings.
\end{proof}

\subsection{Computation}

Let $P \subset \{1,..., N\}$. Define $X_P :=  \{ x_p \mid p \in P \} \subseteq X$. Without loss of generality, assume $P$ consists of the first $l$ elements, where $l = |P|$, the number of elements in $P$. Denote by $\bar{P}$ the complement of $P$ in $\{ 1,...,N \}$. 

For an $n\times m$ matrix $M$, and $P \subseteq \{ 1,...,n  \}$, $Q \subseteq \{ 1,...,m  \}$, denote by $M_{PQ}$ the submatrix of $M$ obtained by removing all rows whose index is in $\bar{P}$, and all columns whose index is in $\bar{Q}$. If $P = Q$, then we will write $M_P = M_{PP}$.

For simplicity, set $A = \z_X$.  Then if we write $A$ as a the block matrix

\[
A = \begin{bmatrix}
A_P & A_{P \bar{P}} \\
A_{P \bar{P}}^T & A_{\bar{P}}
\end{bmatrix}
\]

we can write the formula $Aw = \one$ as the following system of equations

\begin{align*}
A_Pw[P] + A_{P \bar{P}}w[\bar{P}] &= \one_P \\
A_{P \bar{P}}^T w[P] + A_{\bar{P}}w[\bar{P}] &= \one_{\bar{P}}
\end{align*}

where $\one_P$ is the $|P|\times 1$ column vector whose entries are all $1$, and $w[P]$ is the $|P| \times 1$ column vector obtained by taking only the rows of $w$ whose index is in $P$; i.e. it is the column vector $[w(x_{p_1}) ... w(x_{p_l}))]^T$. Since $A_{\bar{P}}$ is invertible, we can solve for $w[P]$:

\begin{equation}\label{equation_magnitude_union_pt1}
w[P] = (A/A_{\bar{P}})^{-1}(\one_P - A_{P\bar{P}}w_{\bar{P}})
\end{equation}

where $A/A_{\bar{P}} = A_P - A_{P \bar{P}}A_{\bar{P}}^{-1}A_{P \bar{P}}^T$ is the \textbf{Schur complement} of $A_{\bar{P}}$ in $A$,  and $w_{\bar{P}}$ is the weight vector for $X_{\bar{P}} $. That is, $w_{\bar{P}} = A_{\bar{P}}^{-1}\one_{\bar{P}}$. In a similar fashion, since $A_P$ is invertible, we can solve for $w[\bar{P}]$:

\begin{equation}\label{equation_magnitude_union_pt2}
w[\bar{P}] = (A/A_P)^{-1}(\one_{\bar{P}} - A_{P\bar{P}}^Tw_P)
\end{equation}

Thus if we have finite sets $X, Y \subset \R^n$ such that $X \cap Y = \emptyset$, we can calculate the weighting of $X \cup Y$ given that we know the weightings of $X$ and $Y$ individually, along with the matrix $\z_{X \cup Y}$.

Since $A_{\bar{P}P}$ and $A_{P\bar{P}}^T$ are not invertible, we cannot use the above to calculate $w_P$ or $w_{\bar{P}}$ in terms of $w$. So we wish to calculate $|X_{\bar{P}}|$ in terms of the weight vector $w$ on the entire space $X$. 

\begin{lemma}\label{lemma_magnitude_restriction}
For finite sets $Y \subset X \subset \R^n$, we have that 
	\begin{equation}\label{eqn_magnitude_restriction}
	|X \setminus Y| = |X| - \restr{w_X}{Y}^T\zeta_X/\zeta_{X\setminus Y}\restr{w_X}{Y}
	\end{equation}
where $w_X$ is the weight vector of $X$,  $\restr{w_X}{Y}$ is $w_X$ restricted to the indices corresponding to $Y$, and $\zeta_X/\zeta_{X\setminus Y}$ is the Schur complement of $\z_{X \setminus Y}$ in $\z_X$.
\end{lemma}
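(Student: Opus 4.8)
The plan is to exploit the block structure of $\z_X$ induced by the partition of $X$ into $Y$ and $X\setminus Y$, and to reduce everything to the two halves of the defining equation $\z_X w_X = \one$. Fix an index set $P \subseteq \{1,\dots,N\}$ with $X_P = Y$, and let $\bar P$ be its complement, so $X_{\bar P} = X\setminus Y$. Set $A = \z_X$ and write, as above,
\[
A = \begin{bmatrix} A_P & A_{P\bar P} \\ A_{P\bar P}^T & A_{\bar P}\end{bmatrix},
\]
so that the Schur complement in the statement is $S := \z_X/\z_{X\setminus Y} = A/A_{\bar P} = A_P - A_{P\bar P}A_{\bar P}^{-1}A_{P\bar P}^T$; here $A_{\bar P}$ is invertible (indeed positive definite) by Theorem~\ref{theorem_pos_def}, and $S$ is symmetric since $A$ and $A_{\bar P}$ are. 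Recall that $|X| = \one^T w_X = \one_P^T w[P] + \one_{\bar P}^T w[\bar P]$, where $w[P] = \restr{w_X}{Y}$ and $w[\bar P]$ are the two blocks of $w_X$, while $|X\setminus Y| = \one_{\bar P}^T A_{\bar P}^{-1}\one_{\bar P}$ since the weight vector of $X\setminus Y$ is $A_{\bar P}^{-1}\one_{\bar P}$ (note $w[\bar P]$, a block of $w_X$, is not this vector). The two pieces of $\z_X w_X = \one$ read $A_P w[P] + A_{P\bar P}w[\bar P] = \one_P$ and $A_{P\bar P}^T w[P] + A_{\bar P}w[\bar P] = \one_{\bar P}$.

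From the second equation I would solve $w[\bar P] = A_{\bar P}^{-1}\one_{\bar P} - A_{\bar P}^{-1}A_{P\bar P}^T w[P]$ and substitute into $|X| = \one_P^T w[P] + \one_{\bar P}^T w[\bar P]$; using symmetry of $A_{\bar P}$ to transpose $\one_{\bar P}^T A_{\bar P}^{-1}A_{P\bar P}^T$, this collapses to
\[
|X| = |X\setminus Y| + \bigl(\one_P - A_{P\bar P}A_{\bar P}^{-1}\one_{\bar P}\bigr)^T w[P].
\]
Set $u := \one_P - A_{P\bar P}A_{\bar P}^{-1}\one_{\bar P}$. Plugging the same expression for $w[\bar P]$ into the first block equation yields $\bigl(A_P - A_{P\bar P}A_{\bar P}^{-1}A_{P\bar P}^T\bigr)w[P] = u$, i.e. $S\,w[P] = u$, which is a restatement of \eqref{equation_magnitude_union_pt1}. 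Hence $u^T w[P] = (S w[P])^T w[P] = w[P]^T S\,w[P]$ by symmetry of $S$, and therefore $|X\setminus Y| = |X| - w[P]^T S\, w[P] = |X| - \restr{w_X}{Y}^T\,(\z_X/\z_{X\setminus Y})\,\restr{w_X}{Y}$, which is \eqref{eqn_magnitude_restriction}.

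The argument is essentially linear-algebraic bookkeeping, and I do not expect a genuine obstacle; the only points requiring care are keeping the block indices straight, distinguishing the block $w[\bar P]$ of $w_X$ from the weight vector $w_{X\setminus Y}$ of the subset, and noting that the Schur complement is taken against $\z_{X\setminus Y} = A_{\bar P}$ (so $S = A/A_{\bar P}$, not $A/A_P$). The only hypothesis actually invoked is invertibility of $A_{\bar P}$, supplied by Theorem~\ref{theorem_pos_def}; symmetry of $\z_X$ and of its principal blocks is what permits the free transposition and the final rewriting of $(S w[P])^T w[P]$ as $w[P]^T S w[P]$. (One could alternatively expand $|X| = \one^T A^{-1}\one$ via the block-inverse formula, but the two-equation route avoids writing out that formula.)
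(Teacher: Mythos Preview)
Your argument is correct, and it takes a genuinely different route from the paper's own proof. The paper does not substitute one block equation into the other; instead it computes $|X_{\bar P}| = \one_{\bar P}^T A_{\bar P}^{-1}\one_{\bar P}$ directly, by expressing each entry $A_{\bar P}^{-1}(i,j)$ via a determinantal identity (Theorem~5.1 of \cite{Ruiz2016RelationshipBT}) in terms of the blocks $B_P$, $B_{Pi}$, $B_{Pj}$ of $B=A^{-1}$. It then sums these determinants over $j$ and $i$, using multilinearity and the identity $\sum_j B_{Pj}=w[P]$, to obtain $|X_{\bar P}| = |X| - w[P]^T B_P^{-1} w[P]$; a second application of the same determinantal identity then shows $B_P^{-1}=A/A_{\bar P}$. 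Your two-equation substitution is shorter and more elementary: it never writes down $A^{-1}$ or any determinants, uses no external reference, and makes it transparent that the only facts needed are invertibility of $A_{\bar P}$ and symmetry. What the paper's route buys, by contrast, is the intermediate identity $B_P^{-1}=A/A_{\bar P}$ (equivalently, $(A^{-1})_P^{-1}$ equals the Schur complement $A/A_{\bar P}$), which is recorded explicitly and may be of independent use; your derivation reaches the final formula without isolating that fact.
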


\begin{proof}
For simplicity, set  $B = A^{-1} = \z_X^{-1}$, and denote by $a_{ij}$ and $b_{ij}$ the elements of $A$ and $B$ respectively. We can see that $|X_{\bar{P}}| = \one^TA_{\bar{P}}^{-1}\one$. Now for $i, j \in \bar{P}$, we use Theorem 5.1 from \cite{Ruiz2016RelationshipBT} to write

\begin{align*}
A_{\bar{P}}^{-1}(i, j) 
&= 
\frac{det\left(
\begin{bmatrix}
b_{p_1p_1} & b_{p_1p_2} & \cdots & b_{p_1p_l} & b_{p_1j} \\
b_{p_2p_1} & \cdots & & & b_{p_2j} \\
\vdots & & & & \vdots \\
b_{p_lp_1} & & & & b_{p_lj} \\
b_{ip_1} & b_{ip_2} & \cdots & b_{ip_l} & b_{ij}
\end{bmatrix}
\right)}{det \left( 
\begin{bmatrix}
b_{p_1p_1} & b_{p_1p_2} & \cdots & b_{p_1p_l}  \\
b_{p_2p_1} & \cdots & &  \\
\vdots & & &  \\
b_{p_lp_1} & & & b_{p_lp_l} \\
\end{bmatrix}
\right)}\\ 
&= 
\frac{1}{det(B_P)}det \left( 
\begin{bmatrix}
B_P & B_{Pj} \\
B_{Pj}^T & b_{ij}
\end{bmatrix} 
\right)
\end{align*}

where $P = \{ p_1,...,p_l \}$ with $p_1 < p_2 < \cdots p_l$, and  $B_{Pj}$ is the $l \times 1$ column vector $[b_{p_1j} b_{p_2j} ... b{p_lj}]^T$. Note that since $A$ is symmetric, $B$ is also symmetric, and thus $b_{ij} = b_{ji}$. Now calculate the $i^{th}$ row sum of $A_{\bar{P}}^{-1}$:

\begin{align*}
A_{\bar{P}}^{-1}\one_{\bar{P}}[i] 
&= 
\sum_{j \notin P} A_{\bar{P}^{-1}}(i, j) 
= 
\frac{1}{det(B_P)}
\sum_{j \notin P}det \left( 
\begin{bmatrix}
B_P & B_{Pj} \\
B_{Pi}^T & b_{ij} \\
\end{bmatrix} 
\right) \\
&= 
\frac{1}{det(B_P)} 
det \left( 
\begin{bmatrix}
B_P & \sum_{j \notin P} B_{Pj} \\
B_{Pi}^T & \sum_{j \notin P} b_{ij}
\end{bmatrix} 
\right) \\
&= 
\frac{1}{det(B_P)} det \left( 
\begin{bmatrix}
B_P & \sum_{j=1}^N B_{Pj} - \sum_{\alpha \in P}B_{P\alpha} \\
B_{Pi}^T & \sum_{j=1}^N b_{ij} - \sum_{\alpha \in P} b_{i \alpha}
\end{bmatrix}  
\right) \\
&= 
\frac{1}{det(B_P)} det \left( 
\begin{bmatrix}
B_P & w[P] \\
B_{Pi}^T & w(x_i)
\end{bmatrix} 
\right) - 
\frac{1}{det(B_P)} det \left( 
\begin{bmatrix}
B_P & \sum_{\alpha \in P} B_{P \alpha} \\
B_{Pi}^T & \sum_{\alpha \in P} b_{i \alpha}
\end{bmatrix}  
\right) \\
&= 
\frac{1}{det(B_P)} det \left( 
\begin{bmatrix}
B_P & w[P] \\
B_{Pi}^T & w(x_i) 
\end{bmatrix} 
\right)
=
\frac{det(B_P)}{det(B_P)}( w(x_i) - B_{Pi}^TB_P^{-1}w[P] ) \\
&= 
w(x_i) - B_{Pi}^TB_P^{-1}w[P]
\end{align*}

Now we sum over $i$ to find the sum of all the elements in $A_{\bar{P}}^{-1}$ to calculate $|X_{\bar{P}}|$: 

\begin{align*}
|X_{\bar{P}}| 
&= 
\one_{\bar{P}}^TA_{\bar{P}}^{-1}\one _{\bar{P}}
= 
\sum_{i \notin P} A_{\bar{P}}^{-1}\one_{\bar{P}}[i] 
= 
\sum_{i \notin P} w(x_i) - B_{Pi}^TB_P^{-1}w[P] \\
&= 
\sum_{i}w(x_i) - \sum_{\alpha \in P} w(x_{\alpha}) - \sum_{i} B_{Pi}^TB_P^{-1}w[P] + \sum_{\alpha \in P} B_{P\alpha}^TB_P^{-1}w[P] \\
&= 
|X| - \sum_{\alpha \in P} w(x_{\alpha}) - \sum_{i} B_{Pi}^TB_P^{-1}w[P] + \sum_{\alpha \in P}w(x_{\alpha}) \\
&= 
|X| - \sum_{i} B_{Pi}^TB_P^{-1}w[P] = |X| - w[P]^TB_P^{-1}w[P]
\end{align*}

Now since $A$ is positive definite, $B$ is positive definite, thus the principal submatrix $B_P$ is positive definite, hence $B_P^{-1}$ is positive definite. We now calculate $B_P^{-1} = B_{\bar{\bar{P}}}^{-1}$. For $i, j \in P$:

\begin{align*}
B_P^{-1}(i, j)
=
B_{\bar{\bar{P}}}^{-1}(i, j) 
&= 
\frac{1}{det(A_{\bar{P}})} det \left( 
\begin{bmatrix}
A_{\bar{P}} & A_{\bar{P}j} \\
A_{\bar{P}i}^T & a_{ij}
\end{bmatrix}  
\right) 
= 
\frac{det(A_{\bar{P}})}{det(A_{\bar{P}})}(a_{ij} - 
A_{\bar{P}i}^T A_{\bar{P}}^{-1} A_{\bar{P}j}) \\
&= 
a_{ij} - A_{\bar{P}i}^T A_{\bar{P}}^{-1} A_{\bar{P}j}
\end{align*}

Thus

\[
B_P^{-1} = B_{\bar{\bar{P}}}^{-1} = A_P - A_{\bar{P}P}^TA_{\bar{P}}^{-1}A_{\bar{P}P}
\]

Since $A$ is symmetric, we have that $A_{\bar{P}P}^T = A_{P\bar{P}}$, and

\[
B_P^{-1} = A_P - A_{P \bar{P}}A_{\bar{P}}^{-1}A_{P \bar{P}}^T
\]

If we write $A$ as the block matrix 

\[
A 
= 
\begin{bmatrix}
A_P & A_{P \bar{P}} \\
A_{P \bar{P}}^T & A_{\bar{P}}
\end{bmatrix}
\]

we can see that $B_P^{-1}$ is the Schur complement of $A_{\bar{P}}$ in $A$, denoted $A/A_{\bar{P}}$. Thus we can write

\[
|X_{\bar{P}}| = |X| - w[P]^TA/A_{\bar{P}}w[P]
\]

\end{proof}

We know that $det(A/A_{\bar{P}}) = \frac{det(A)}{det(A_{\bar{P}})}$. Fischer's inequality gives that $det(A) \leq det(A_P)det(A_{\bar{P}})$. Since $A_P$ is positive definite, and has all ones on the diagonal, Hadamard's inequality gives that $det(A_P) \leq 1$. Thus we can see that $det(A/A_{\bar{P}}) = \frac{det(A)}{det(A_{\bar{P}})} \leq det(A_P) \leq 1$.

\begin{proposition}\label{prop_magnitude_restriction_leq}
For finite sets $Y \subset X \subset \R^n$, we have that 

	\begin{align}\label{equation_magnitude_restr_ineq}
	|X| 
	&\geq 
	|X| - N_Ydet(\z_X/\z_{X \setminus Y})\left(  \min \{ w_X(y)^2 \mid y \in Y \} \right) \\
	& \geq
	|X \setminus Y| \nonumber \\
	& \geq 
	|X| - N_Y\left( \max\{ w_X(y)^2 \mid y \in Y \} \right) \nonumber \\ \nonumber
	\end{align}
where $w_X$ is the weight vector of $X$, and $N_Y$ is the number of points in $Y$.
\end{proposition}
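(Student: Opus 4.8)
The plan is to obtain the whole chain (\ref{equation_magnitude_restr_ineq}) from Lemma~\ref{lemma_magnitude_restriction} by bounding a single quadratic form. Write $v := \restr{w_X}{Y} \in \R^{N_Y}$ and $S := \z_X/\z_{X\setminus Y}$, so that the lemma reads $|X\setminus Y| = |X| - v^{T}Sv$. Subtracting $|X|$ and negating, (\ref{equation_magnitude_restr_ineq}) is exactly the two-sided estimate
\[
0 \;\le\; N_Y\,\det(S)\Bigl(\min_{y\in Y} w_X(y)^2\Bigr) \;\le\; v^{T}Sv \;\le\; N_Y\Bigl(\max_{y\in Y} w_X(y)^2\Bigr),
\]
so it suffices to prove these three inequalities.

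The outer-left inequality and the bound $|X|\ge |X\setminus Y|$ are immediate: by the proof of Lemma~\ref{lemma_magnitude_restriction}, $S = B_P^{-1}$ with $B = \z_X^{-1}$ positive definite (Theorem~\ref{theorem_pos_def}) and $P$ the index set of $Y$, so $S$ is positive definite; hence $\det(S)>0$ and $v^{T}Sv\ge 0$ (the inequality $|X|\ge|X\setminus Y|$ is also Theorem~\ref{theorem_subset_mag_leq}). I will also use the fact recorded after Lemma~\ref{lemma_magnitude_restriction}, namely $\det(S)\le 1$, which follows from Fischer's and Hadamard's inequalities.

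For the remaining two inequalities I would sandwich the form by the extreme eigenvalues of $S$,
\[
\lambda_{\min}(S)\,\|v\|_2^{2} \;\le\; v^{T}Sv \;\le\; \lambda_{\max}(S)\,\|v\|_2^{2},
\]
and use $\|v\|_2^{2} = \sum_{y\in Y} w_X(y)^2$, which lies between $N_Y\min_{y}w_X(y)^2$ and $N_Y\max_{y}w_X(y)^2$. The upper estimate then follows once we know $\lambda_{\max}(S)\le 1$, and the lower estimate follows once we know $\lambda_{\min}(S)\ge\det(S)$; moreover the latter is a consequence of the former, because $\det(S) = \prod_i\lambda_i(S) \le \lambda_{\min}(S)\,\lambda_{\max}(S)^{\,N_Y-1} \le \lambda_{\min}(S)$ as soon as every eigenvalue of $S$ is at most $1$.

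Thus the proposition comes down to the single spectral claim $\lambda_{\max}\!\bigl(\z_X/\z_{X\setminus Y}\bigr)\le 1$, i.e.\ $S\preceq I$, and this is the step I expect to be the real obstacle. Two natural approaches present themselves: (i) start from $S = \z_Y - C\,\z_{X\setminus Y}^{-1}C^{T}\preceq \z_Y$, where $C=\z_{Y,\,X\setminus Y}$, and try to use the cross term to beat the identity down; or (ii) use $S=B_P^{-1}$, so that $\lambda_{\max}(S)\le 1$ is equivalent to $B_P\succeq I$ — every principal submatrix of $\z_X^{-1}$ dominating the identity — for which a first ingredient is the Cauchy–Schwarz bound $(\z_X^{-1})_{ii}\ge 1/(\z_X)_{ii}=1$ on the diagonal. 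Nailing down this spectral bound (or, if it is needed, identifying the hypotheses on $Y\subset X$ under which it holds) is where the work lies; everything else is the bookkeeping above.
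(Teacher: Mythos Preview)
Your reduction to the two-sided bound on $v^{T}Sv$ is correct and is where the paper starts too, but the paper does not go through $\lambda_{\max}(S)$. Instead it works with the trace: from $S_{ii}=1-A_{\bar P i}^{T}A_{\bar P}^{-1}A_{\bar P i}<1$ one gets $\mathrm{tr}(S)<N_Y$, and the paper then asserts the identity $v^{T}Sv=\sum_{i}\lambda_i\,w_X(x_i)^2$ (the eigenvalues of $S$ paired with the \emph{standard-basis coordinates} of $v$), from which $v^{T}Sv\le(\sum_i\lambda_i)\max_i w_X(x_i)^2\le N_Y\max_i w_X(x_i)^2$; the lower bound is obtained from the same sum via AM--GM together with $\det(S)\le 1$. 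So the paper's route is trace-plus-diagonalization rather than your Rayleigh-quotient route.

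The gap you flag is real and cannot be closed: $\lambda_{\max}(\z_X/\z_{X\setminus Y})\le 1$ is false in general. Take $X=\{0,\epsilon,M\}\subset\R$ with $Y=\{0,\epsilon\}$; then $A_{\bar P}=(1)$ and $A_{P\bar P}=(e^{-M},e^{-(M-\epsilon)})^{T}\to 0$ as $M\to\infty$, so $S\to\z_Y$, whose top eigenvalue is $1+e^{-\epsilon}>1$. In the same limit $w_X(0)=w_X(\epsilon)\to 1/(1+e^{-\epsilon})$ and $|X|\to 1+2/(1+e^{-\epsilon})$; sending $\epsilon\to 0$ afterwards gives $|X|\to 2$ and $N_Y\max_y w_X(y)^2\to 1/2$, so the claimed lower bound tends to $3/2$ while $|X\setminus Y|=1$. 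Thus the inequality itself fails here, not merely your argument for it. Your Rayleigh framing is in fact the honest version of the paper's step --- the identity $v^{T}Sv=\sum_i\lambda_i v_i^2$ invoked there holds only when the standard basis diagonalizes $S$ --- and it correctly isolates the missing ingredient, which is simply unavailable.
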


\begin{proof}
Since $A/A_{\bar{P}}$ is positive definite, we know that $tr(A/A_{\bar{P}}) > 0$. Also, $tr(A_P) = |P|$ since $A_P$ has ones on its diagonal. Now

\[
tr(A/A_{\bar{P}}) 
= 
\sum_{i \in P} a_{ii} - A_{\bar{P}i}^TA_{\bar{P}}^{-1}A_{\bar{P}i} 
= 
\sum_{i \in P} 1 - A_{\bar{P}i}^TA_{\bar{P}}^{-1}A_{\bar{P}i} 
= 
|P| - \sum_{i \in P} A_{\bar{P}i}^TA_{\bar{P}}^{-1}A_{\bar{P}i} 
> 
0
\]

Since $A_{\bar{P}}^{-1}$ is positive definite, we know that $x^TA_{\bar{P}}^{-1}x > 0$ for all $x \in \R^{|\bar{P}|}\setminus \mathbf{0}$; in particular $A_{\bar{P}i}^TA_{\bar{P}}^{-1}A_{\bar{P}i} > 0$. Thus 

\[
tr(A_P) 
= 
|P| 
> 
|P| - \sum_{i \in P} A_{\bar{P}i}^TA_{\bar{P}}^{-1}A_{\bar{P}i} 
= 
tr(A/A_{\bar{P}}) 
> 
0
\]

Next, we know that

\[
w[P]^TA/A_{\bar{P}}w[P] 
= 
\sum_{i \in P} \lambda_i w(x_{i})^2
\]

where $\{ \lambda_i \mid i \in P \}$ are the eigenvalues of $A/A_{\bar{P}}$. We know that since $A/A_{\bar{P}}$ is positive definite, $\lambda_i > 0$ for all $i \in P$. Now let $\beta \in P$ be such that $w(x_{i})^2 \leq w(x_{\beta})^2$ for all $i \in P$. Then

\[
0 
< 
\sum_{i \in P}\lambda_i w(x_{i})^2 
\leq 
\sum_{i \in P}\lambda_i w(x_{\beta})^2 
= 
w(x_{\beta})^2 \sum_{i \in P} \lambda_i 
\leq 
|P|  w(x_{\beta})^2
\]

Thus we have that 

\begin{equation}\label{equation_magnitude_lb_ineq}
|X_{\bar{P}}| 
= 
|X| - w[P]^TA/A_{\bar{P}}w[P] 
\geq 
|X| - |P|  w(x_{\beta})^2
\end{equation}

By the inequality of arithmetic and geometric means, we have that 

\begin{align*}
\sum_{i \in P}\lambda_i w(x_{i})^2 
&\geq 
|P| \sqrt[\leftroot{-3}\uproot{3}|P|]{\prod_{i \in P}\lambda_i w(x_{i})^2} 
= 
|P|\sqrt[\leftroot{-3}\uproot{3}|P|]{det(A/A_{\bar{P}})\prod_{i \in P} w(x_{i})^2} \\
&\geq 
|P| det(A/A_{\bar{P}}) \sqrt[\leftroot{-3}\uproot{3}|P|]{\prod_{i \in P} w(x_{i})^2} 
= 
|P| det(A/A_{\bar{P}}) \prod_{i \in P} w(x_{i})^{\frac{2}{|P|}}
\geq 
0
\end{align*}

since $det(A/A_{\bar{P}}) \leq 1$. Let $\gamma \in P$ be such that $w(x_{\gamma})^2 \leq w(x_{i})^2$ for all $i \in P$. Then

\[
\sum_{i \in P}\lambda_i w(x_{i})^2 
\geq 
|P| det(A/A_{\bar{P}}) \prod_{i \in P} w(x_{i})^{\frac{2}{|P|}} 
\geq 
|P| det(A/A_{\bar{P}}) w(x_{\gamma})^2
\]

Then we have that

\begin{align*}
|X| 
&\geq 
|X| - |P| det(A/A_{\bar{P}}) w(x_{\gamma})^2 \\
&\geq 
|X| - w[P]^TA/A_{\bar{P}}w[P] \\
&= 
|X_{\bar{P}}| \\
&\geq 
|X| - |P|  w(x_{\beta})^2 
\end{align*}

\end{proof}

If we substitute $tX$ for $X$, $tX_{\bar{P}}$ for $X_{\bar{P}}$, and $w_t$ for $w$ in the above equation and take $t \rightarrow \infty$, we can see this reduces to what we expect. For $|tX| \rightarrow \#X = N$, $|tX_{\bar{P}}| \rightarrow N -|P|$, and $w_t(x) \rightarrow 1$ for all $x \in tX$. 

If we can choose $P$ such that $w(x_{i})$ is small for all $i \in P$, then removing all $x_{i}$ will not affect the magnitude that much; that is, $|X_{\bar{P}}|$ will be close to $|X|$. Explicitly, if $w(x_{\beta})$ can be chosen such that $w(x_{\beta}) < \frac{\epsilon}{|P|}$ for some $\epsilon > 0$, then $|X| \geq |X_{\bar{P}}| \geq |X| - \epsilon$.

We can now investigate the situation where $Z = X \cup Y$ where $X, Y \subset \R^n$ are finite, but are not necessarily disjoint. In order to calculate $|Z|$ given that we know $w_X$ and $w_Y$, we can combine Lemma \ref{lemma_magnitude_restriction} with Equations \ref{equation_magnitude_union_pt1} and \ref{equation_magnitude_union_pt2}. For completeness, we summarize this here

\begin{corollary}
Let $Z = X \cup Y$ where $X, Y \subset \R^n$ are finite sets. Set $W = X \setminus (X \cap Y) = X \setminus Y$. Then the weight vector $w_Z$ can be computed using the formulas

\begin{align} 
\restr{w_Z}{W} 
&= 
(\z_Z/\z_Y)^{-1} \left( \one_{W} - \z_Z[W][Y]w_Y \right) \\
\restr{w_Z}{Y}
&=
(\z_Z/\z_{W})^{-1} \left( \one_{Y} - \z^T_Z[W][Y][w_X - \z_X^{-1}[X\cap Y][W]^T \z_X/\z_{W} \restr{w_X}{X\cap Y}  \right) \\ 
\nonumber
\end{align}
where e.g. $\z_Z[W][Y]$ is the $N_W \times N_Y$ matrix corresponding to deleting all rows of $\z_Z$ except those whose index corresponds to points in $W$, and deleting all columns of $\z_Z$ except those whose index corresponds to points in $Y$.
\end{corollary}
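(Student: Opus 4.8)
The plan is to reduce the claim to the two block-matrix identities \ref{equation_magnitude_union_pt1} and \ref{equation_magnitude_union_pt2}, applied to the set $Z$, together with the vector-valued computation carried out inside the proof of Lemma \ref{lemma_magnitude_restriction}, applied to the set $X$. First I would rewrite $Z = X \cup Y$ as the \emph{disjoint} union $Z = W \sqcup Y$ with $W := X \setminus Y = X \setminus (X \cap Y)$, and record the elementary observation that, because each entry $\exp(-\Vert x_i - x_j \Vert)$ of a similarity matrix depends only on pairwise Euclidean distances, the principal submatrices $\z_Z[W]$ and $\z_Z[Y]$ are literally the standalone similarity matrices $\z_W$ and $\z_Y$; hence the weight vector of the metric subspace on $Y$ is $w_Y$, and the weight vector of the metric subspace on $W$ is the standalone vector $w_W$. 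All inverses and Schur complements below exist by Theorem \ref{theorem_pos_def}.

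With this in place the first identity is immediate. Applying Equation \ref{equation_magnitude_union_pt1} to $A = \z_Z$, with $P$ the index set of $W$ and $\bar P$ the index set of $Y$, gives
\[
\restr{w_Z}{W} = (\z_Z/\z_Y)^{-1}\left(\one_{W} - \z_Z[W][Y]\,w_Y\right),
\]
which is the claimed formula. Applying Equation \ref{equation_magnitude_union_pt2} with the same $P$ gives
\[
\restr{w_Z}{Y} = (\z_Z/\z_{W})^{-1}\left(\one_{Y} - \z_Z[W][Y]^T\,w_W\right),
\]
so it only remains to express the standalone vector $w_W$ in terms of the known vector $w_X$.

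For that last step I would not invoke the statement of Lemma \ref{lemma_magnitude_restriction}, which produces only the scalar $|X \setminus Y|$, but the intermediate per-index identity in its proof, $A_{\bar P}^{-1}\one_{\bar P}[i] = w(x_i) - B_{Pi}^T B_P^{-1} w[P]$ with $B = A^{-1}$ and $B_P^{-1} = A/A_{\bar P}$; this says precisely that the $i$-th entry of the weight vector of the subspace left after deleting $X_P$ is obtained from $w$ by subtracting the $i$-th entry of $\z_X^{-1}[\bar P][P]\,(\z_X/\z_X[\bar P])\,w[P]$. Applying it with $A = \z_X$ and $P$ equal to the index set of $X \cap Y$ — so that $\bar P$ is the index set of $W$ and $\z_X/\z_X[\bar P] = \z_X/\z_W$ — stacking over all $i$ corresponding to points of $W$, and using the symmetry of $\z_X^{-1}$ to rewrite $\z_X^{-1}[W][X\cap Y]$ as $\left(\z_X^{-1}[X\cap Y][W]\right)^T$, yields
\[
w_W = \restr{w_X}{W} - \left(\z_X^{-1}[X\cap Y][W]\right)^T (\z_X/\z_{W})\,\restr{w_X}{X\cap Y}.
\]
Substituting this for $w_W$ in the formula for $\restr{w_Z}{Y}$ above gives the second displayed identity (with the $w_X$ appearing there read as $\restr{w_X}{W}$).

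Since the result is a direct assembly of facts already proved, I do not expect a substantive obstacle; the work is entirely bookkeeping. The two points to be careful about are: keeping the Schur complement $\z_X/\z_W$ coming from the restriction computation on $X$ distinct from the Schur complement $\z_Z/\z_W$ that is inverted in the union computation on $Z$; and verifying the submatrix identifications $\z_Z[W] = \z_W$, $\z_Z[Y] = \z_Y$, together with the corresponding identifications of the weight vectors of subspaces, so that the ambient and standalone objects may be used interchangeably. Presenting this transparently, rather than any calculation, is the main difficulty.
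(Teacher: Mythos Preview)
Your proposal is correct and matches the paper's approach exactly: the paper does not give a separate proof of this corollary but simply says ``combine Lemma \ref{lemma_magnitude_restriction} with Equations \ref{equation_magnitude_union_pt1} and \ref{equation_magnitude_union_pt2},'' which is precisely what you do. You are right to flag that the scalar statement of Lemma \ref{lemma_magnitude_restriction} is not enough and that one must reach into its proof for the per-index identity $A_{\bar P}^{-1}\one_{\bar P}[i] = w(x_i) - B_{Pi}^T B_P^{-1} w[P]$; and your observation that the paper's $w_X$ in the second displayed formula should be read as $\restr{w_X}{W}$ is also correct.
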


\subsection{Discussion}\label{discussion}

Let us rewrite Equation \ref{equation_magnitude_lb_ineq} from Proposition \ref{prop_magnitude_restriction_leq} to include $t$:

\begin{equation}\label{mag_ineq_with_t}
|tX| \geq |tX_{\bar{P}}| \geq |tX| - |P|w_t(x_{\beta_t})^2
\end{equation}

One difficulty here is that the point $x_{\beta_t}$ varies with $t$. However, note that for some fixed 
$x \in X_{P}$, we have that $w_t(x_{\beta_t})^2 \geq w_t(x)^2$ for all $t \in (0, \infty)$. Thus we have that 

\[
|tX| \geq |tX| - |P|w_t(x)^2 \geq |tX| - |P|w_t(x_{\beta_{t}})^2
\]

but we do not know whether or not

\[
|tX_{\bar{P}}| \geq |tX| - |P|w_t(x)^2
\]

We can now see that if we integrate these quantities over $(0, \infty)$ against $e^{-t}$, we get that 


\[
\mu_0(|tX|)
\geq
\mu_0(|tX_{\bar{P}}|)
\geq
 \mu_0(|tX|) -  |P|\int_0^{\infty} e^{-t}w_t(x_{\beta_{t}})^2dt
\]

where we will abuse notation and write $\mu_0(|tX|) = \int_0^{\infty} e^{-t}|tX| dt $. We also have that 


\[
\mu_0(|tX|) -  |P|\mu_0(x) 
\geq
\mu_0(|tX|) - |P|\int_0^{\infty} e^{-t}w_t(x_{\beta_{t}})^2dt
\]

Thus if we let $\hat{x}$ be the point in $X_P$ such that $\mu_0(\hat{x}) \geq \mu_0(x)$ for all $x \in X_P$, we can see that 


\[
\mu_0(|tX|) -  |P|\mu_0(x) 
\geq
\mu_0(|tX|)  - |P|\mu_0(\hat{x}) 
\geq
\mu_0(|tX|)  - |P|\int_0^{\infty} e^{-t}w_t(x_{\beta_{t}})^2dt
\]

We will in practice use the value $\mu_0(\hat{x})$ in place of $\int_0^{\infty} e^{-t}w_t(x_{\beta_{t}})^2dt$.

If we have a finite set $X \subset \R^n$, we can form its convex hull; denoted $Conv(X)$. Since $Conv(X)$ is an infinite subset of $\R^n$, it's magnitude can be defined as the supremum of magnitudes of all the finite subsets of it:

\[
|Conv(X)| = \sup\{ |Y| \mid Y \subseteq Conv(X)  \}
\]

From theorems \ref{theorem_subset_mag_leq} and \ref{theorem_finite_hd_approx} we can see that if $\{ X_k \}$ is a sequence of finite subsets of $Conv(X)$, each obtained by taking $k$ samples from a uniform distribution over the region of $Conv(X)$, we have that the sequence $|X_k|$ monotonically increases to $|Conv(X)|$. So if we assume that we are in the situation where the original set $X$ is close to (in Hausdorff distance) to one of the $X_k$ for $k$ large, then we can think of $X$ as being a discrete approximation for the infinite region $Conv(X)$.


Although the function $|tX| \rightarrow N_X$ as $t \rightarrow \infty$, we will still think of $|tX|$ as an approximation to $|tConv(X)|$ for this discussion. We know that by Theorem \ref{theorem_vol}

\[
\lim_{t \rightarrow \infty}\frac{|tConv(X)|}{t^n} = \frac{Vol(Conv(X))}{n! Vol(B_n)}
\]

where $B_n$ is the unit ball in $\R^n$. So let us approximate $|tX|$ with the function $\frac{Vol(Conv(X))t^n}{n! Vol(B_n)}$. So the inequality \ref{mag_ineq_with_t} will be approximated with


\[
\frac{Vol(Conv(X))t^n}{n! Vol(B_n)} 
\geq 
\frac{Vol(Conv(X_{\bar{P}}))t^n}{n! Vol(B_n)} 
\geq 
\frac{Vol(Conv(X))t^n}{n! Vol(B_n)} - |P|w_t(x_{\beta_t})^2
\]

If we integrate this over $(0, \infty)$ against $e^{-t}$, we get

\begin{align*}
\int_0^{\infty} e^{-t} \frac{Vol(Conv(X))t^n}{n! Vol(B_n)} dt
& \geq 
\int_0^{\infty} e^{-t} \frac{Vol(Conv(X_{\bar{P}}))t^n}{n! Vol(B_n)} dt \\
&\geq 
\int_0^{\infty} e^{-t}  \frac{Vol(Conv(X))t^n}{n! Vol(B_n)} dt - 
|P|  \int_0^{\infty} e^{-t} w_t(x_{\beta_t})^2 dt
\end{align*}

This integrates out to

\begin{align*}
\frac{Vol(Conv(X))}{n Vol(B_n)} 
&\geq \frac{Vol(Conv(X_{\bar{P}}))}{n Vol(B_n)} \\
&\geq \frac{Vol(Conv(X))}{n Vol(B_n)} - |P|  \int_0^{\infty} e^{-t} w_t(x_{\beta_t})^2 dt
\end{align*}

Multiply through by $nVol(B_n)$ to get

\begin{align*}
Vol(Conv(X))
&\geq 
Vol(Conv(X_{\bar{P}})) \\
&\geq 
Vol(Conv(X)) - |P|nVol(Conv(X))\int_0^{\infty} e^{-t} w_t(x_{\beta_t})^2 dt
\end{align*}

Thus if the set we are removing whose indices correspond to $P$ can be chosen such that $\int_0^{\infty} e^{-t} w_t(x_{\beta_t})^2 dt \leq \frac{\epsilon}{|P|nVol(Conv(X))}$ for some $\epsilon > 0$, we have that 

\begin{align*}
Vol(Conv(X)) \geq Vol(Conv(X_{\bar{P}})) \geq Vol(Conv(X)) - \epsilon
\end{align*}

As per the discussion at the beginning of this subsection, in practice we will condition on $\mu_0(\hat{x})$ instead of $\int_0^{\infty} e^{-t} w_t(x_{\beta_t})^2 dt$. We will also use the quantity $|X|$ instead of $Vol(Conv(X))$. That is, if the set we are removing whose indices correspond to $P$ can be chosen such that $\mu_0(\hat{x}) \leq \frac{\epsilon}{|P|n|X|}$, then $Vol(Conv(X_{\bar{P}}))$ will be close to $Vol(Conv(X))$.

%

\section{Application of Zeroth Moment to Approximating Convex Hulls}\label{section_CH}

\subsection{Convex Hulls}

In this section we will describe, by employing the definition of moment, a simple filtering technique to approximate the convex hull of a collection of points in $\R^d$. We first give a runtime analysis of these algorithms, and then give the results of a few experiments approximating the convex hull of a collection of points.

\begin{algorithm}[H]\label{CHApprox}
\SetAlgoLined
 \KwIn{$X \subset \R^d$ finite set of $n$ points, $\epsilon \geq 0$ error threshold }\
 Calculate $\mu_0(x)$ for all $x \in X$\
 
Label the points of $X$ in ascending order of the values of $\mu_0(x)$; $\mu_0(x_0) \leq ... \leq \mu_0(x_n)$\

Search for the largest index $i$ such that $\mu_0(x_i) \leq \frac{\epsilon}{di|X|}$ \

\KwRet{$Conv(\{x_i,...,x_n\})$ }

 \caption{Approximate Convex Hull}
\end{algorithm}

%
%

\subsection{Runtime Analysis}

Steps $1\mhyphen3$ in Algorithm \ref{CHApprox} can be viewed as a preprocessing step to approximating the convex hull. The computation of the weight vector $w$ for a finite collection of points $X \subset \R^d$ can be parallelized (via parallel algorithms for matrix inversion, see e.g. \cite{MatrixComputations}, \cite{MtxInvPll}); thus can be minimized in accordance with available resources. Otherwise, matrix inversion has computational complexity $\OO(n^{\omega})$, where $\omega$ is matrix multiplication time. It should also be noted that $w$ can be computed by solving the linear system $\zeta_Xw = \one$ for $w$. Then sorting the values of $X$ by their moment can be done in $\OO(n\log n)$, and searching for the greatest index meeting the criteria in Algorithm \ref{CHApprox} step 3 can be done in $\OO(\log n)$ using a binary search.


Although the convex hull of a finite set of points in $\R^d$ can be computed using the Quickhull algorithm \cite{Barber:1996:QAC:235815.235821}, whose average complexity is taken to be $\OO(n\log(n))$, the worst case can potentially have complexity $\OO(n^2)$. This indicates that there is still potential to suffer from extreme compute times while computing the convex hull if the size of the data set is large. Algorithms \ref{CHApprox} can potentially be used to reduce the number of data points being used to compute the convex hull, to mitigate the computation time.


%
%

\subsection{Experiment}

In this section we describe numerical experiments measuring how fast the convex hull is recovered using Algorithm \ref{CHApprox}. Given a data set $X \subset \R^d$, the points can be ordered in descending order of their moment; that is, order $X = \{ x_1, ... , x_n \}$ such that $\mu_0(x_1) \geq \cdots \geq \mu_0(x_n)$. For a given $i$, write $X_{\leq i} = \{ x_1,...,x_i \}$, that is $X_{\leq i}$ are the $i$ points of $X$ with the highest moment. We look at $Vol(Conv(X_{\leq i}))$ as well as $|X_{\leq i}|$ for $i = 1, ... , n$ to determine to what extent the convex hull of $X$ is captured by the convex hull of points in $X$ with high moment. We also make special note of the smallest value of index $I$ such that $Vol(Conv(X_{\leq I}))$ is greater than $90\%$ of $Vol(Conv(X))$.

In our experiments, we sampled $X$ from three Gaussian distributions; for an example, see D) in Figure \ref{weight_plots}. We generated 20 data sets in each of dimensions $2, 3, 4, 5$ and recorded the quantities discussed above. Table \ref{ch_approx_exp_results} summarizes the results of our experiments. Figure \ref{ch_approx_exp} shows the plots of $Vol(Conv(X_{\leq i}))$ (blue) and $|X_{\leq i}|$ (orange) for randomly selected data set for each of dimensions $2, 3, 4, 5$, as well as a horizontal line (black) marking when $Vol(Conv(X_{\leq i}))$ reaches at least $90\%$ of the total of $Vol(Conv(X))$.

\begin{figure}
  \includegraphics[scale=0.3]{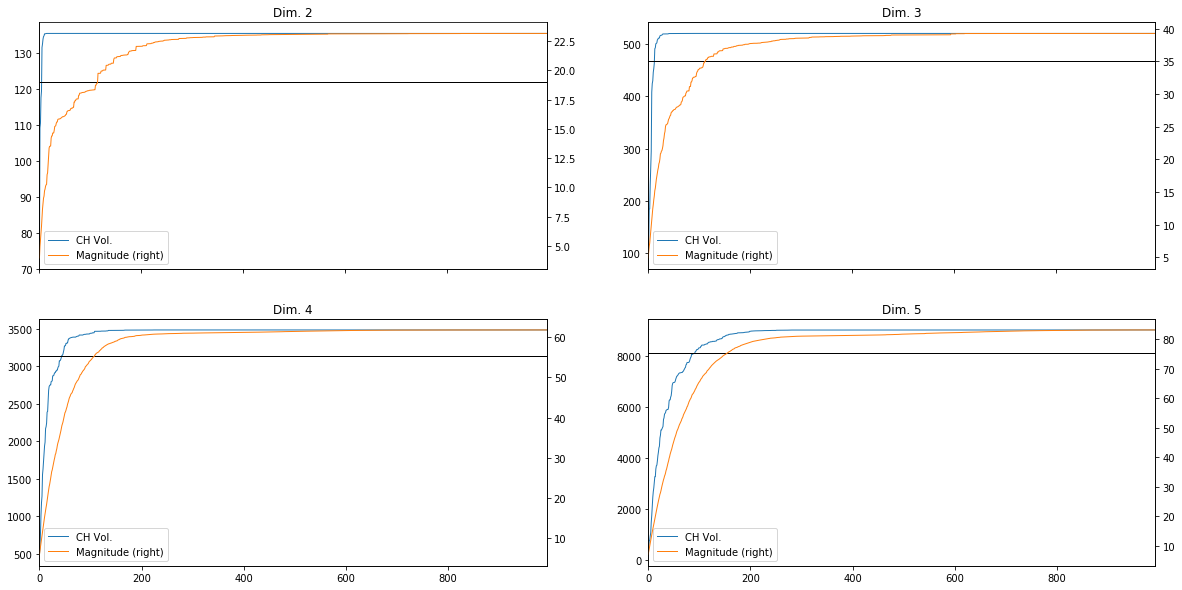}
  \caption{}
  \label{ch_approx_exp}
\end{figure}

\begin{table}[h]
\begin{center}
\begin{tabular}{ |m{4em}|m{4em}|m{4em}|m{4em}|m{4em}|m{4em}| } 
 \hline
 \textbf{Num. points} & \textbf{Dimension} & \textbf{Avg. \# points to 90\% CH volume} & \textbf{Std. dev. points to 90\% CH volume} & \textbf{Avg. \# CH vertices} & \textbf{Std. dev. \# CH vertices} \\ 
 \hline
1000 & 2 & 4.1 & 1.21 & 12.9 & 2.3 \\
\hline
1000 & 3 & 15.7 & 2.4 & 45.1 & 4.43 \\
\hline
1000 & 4 & 43.35 & 6.32 & 110 & 10.9\\
\hline
1000 & 5 & 80 & 10.2& 203 & 16 \\
 \hline
\end{tabular}
\caption{Summary statistics of experiments approximating convex hull}
\label{ch_approx_exp_results}
\end{center}
\end{table}

\section{Conclusion}

We have investigated in more detail the significance of the weight vector for a finite subset of $\R^n$, and introduced the notion of the moment of a point in order to give a measurement that carries important geometric information. We used the moment as an ordering for the points that is useful for selecting points when approximating the convex hull. Future directions of investigation include further exploring the connection between the weight and moment vectors of a finite set $X \subset \R^n$ and its geometric structure; the definition of moment \ref{moment} suggests that the quantities  
\[
\mu_n(x) =  \int_0^{\infty} t^n e^{-t}w_t(x)^2dt
\]

are interesting, as well as the (shifted) Laplace transform of $w_t(x)^2$:

\[
\mathcal{L}\{w_t\}(s) = \int_0^{\infty}  e^{-(s+1)t}w_t(x)^2dt
\]
Potential applications to computational geometry include dynamic convex hull computation, and range searching. Applications to machine learning are also being pursued.

%
%
%

\bibliographystyle{plain}
\bibliography{magnitude_convex_hull}

\begin{thebibliography}{1}

\bibitem{Barber:1996:QAC:235815.235821}
C.~Bradford Barber, David~P. Dobkin, David~P. Dobkin, and Hannu Huhdanpaa.
\newblock The quickhull algorithm for convex hulls.
\newblock {\em ACM Trans. Math. Softw.}, 22(4):469--483, December 1996.

\bibitem{Barcelo18CptEuclid}
{Juan Antonio} Barcelo and {Anthony} Carbery.
\newblock {On the magnitudes of compact sets in Euclidean spaces}.
\newblock {\em {American Journal of Mathematics}}, 140(2):449--494, 2018.

\bibitem{MatrixComputations}
{Gene}~H. Golub and {Charles} F.~Van Loan.
\newblock {\em {Matrix Computations}}.
\newblock Johns Hopkins University Press, third edition edition, Oct 1996.

\bibitem{MtxInvPll}
{Michael} Lass, {Stephan} Mohr, {Hendrik} Wiebeler, {Thomas}~D. K{\:u}hne, and
  Christian Plessl.
\newblock {A Massively Parallel Algorithm for the Approximate Calculation of
  Inverse p-th Roots of Large Sparse Matrices}.
\newblock {\em Proceedings of ACM Conference}, April 2017.

\bibitem{Leinster2013TheMO}
Tom Leinster.
\newblock The magnitude of metric spaces.
\newblock {\em Documenta Mathematica}, 18:857–905, 2013.

\bibitem{LeinsterMeckes16}
Tom Leinster and Mark Meckes.
\newblock The magnitude of a metric space: From category theory to geometric
  measure theory.
\newblock Aug 2018.

\bibitem{LeinsterWillerton2013}
Tom Leinster and Simon Willerton.
\newblock On the asymptotic magnitude of subsets of euclidean space.
\newblock {\em Geometriae Dedicata}, 164(1):287--310, Jun 2013.

\bibitem{MeckesPosDef}
{M. W.} Meckes.
\newblock Positive definite metric spaces.
\newblock {\em Positivity}, 17:733--757, Sept 2013.

\bibitem{Ruiz2016RelationshipBT}
E.~Ju{\'a}rez Ruiz, R.~Cortes Maldonado, and Francisco Rodr{\'i}guez.
\newblock Relationship between the inverses of a matrix and a submatrix.
\newblock {\em Computaci{\'o}n y Sistemas}, 20, 2016.

\end{thebibliography}

\end{document}